 \newtheorem{theorem}{Theorem}[section]%
 \newtheorem{corollary}[theorem]{Corollary}
 \newtheorem{proposition}[theorem]{Proposition}
 \theoremstyle{definition}
 \newtheorem{example}{Example}[section]
 \theoremstyle{remark}
  \numberwithin{equation}{section}
\renewcommand{\phi}{\varphi}
\renewcommand{\theta}{\vartheta}
\DeclareMathOperator{\tform}{\mathfrak{t}}
\DeclareMathOperator{\wform}{\mathfrak{w}}
\DeclarePairedDelimiterX\sipt[2]{(}{)_{\tform}}{#1\,\delimsize\vert\,#2}
\DeclarePairedDelimiterX\sipv[2]{(}{)_{v}}{#1\,\delimsize\vert\,#2}
\DeclarePairedDelimiterX\sipw[2]{(}{)_{w}}{#1\,\delimsize\vert\,#2}
\renewcommand{\le}{\mathscr L(E;E)}
\newcommand{\pair}[2]{\left(\begin{array}{c}\!\!#1\!\!\\ \!\!#2\!\!\end{array}\right)}
\newcommand{\alg}{\mathscr{A}}
\newcommand{\ideal}{\mathscr{I}}
\newcommand{\dupR}{\mathbb{R}}
\newcommand{\dupC}{\mathbb{C}}
\newcommand{\dom}{\operatorname{dom}}
\newcommand{\ran}{\operatorname{ran}}
\newcommand{\lef}{\mathscr{L}(E;F)}
\newcommand{\bha}{\mathscr{B}(\hila)}
\newcommand{\lee}{\mathscr{L}(E;\anti{E})}
\newcommand{\M}{\mathscr{I}}
\newcommand{\sigef}{\sigma(E,F)}
\newcommand{\sigfe}{\sigma(F,E)}
\newcommand{\AmaxB}{A_{max}^B}
\newcommand{\AmaxM}{A_{max}^M}
\newcommand{\hil}{H}
\newcommand{\hila}{H_A}
\newcommand{\kil}{K}
\newcommand{\bh}{\mathscr{B}(\hil)}
\DeclarePairedDelimiterX\abs[1]{\lvert}{\rvert}{#1}
\DeclarePairedDelimiterX\sip[2]{(}{)}{#1\,\delimsize\vert\,#2}
\DeclarePairedDelimiterX\sipta[2]{(}{)_{\!_{\widetilde{A}}}}{#1\,\delimsize\vert\,#2}
\DeclarePairedDelimiterX\sipf[2]{(}{)_{f}}{#1\,\delimsize\vert\,#2}
\DeclarePairedDelimiterX\sipg[2]{(}{)_{g}}{#1\,\delimsize\vert\,#2}
\DeclarePairedDelimiterX\siptw[2]{(}{)_{\tform+\wform}}{#1\,\delimsize\vert\,#2}
\DeclarePairedDelimiterX\set[2]{\{}{\}}{#1\,\delimsize\vert\,#2}
\DeclarePairedDelimiterX\dual[2]{\langle}{\rangle}{#1,#2}
\DeclarePairedDelimiterX\duale[2]{\langle}{\rangle_1}{#1,#2}
\DeclarePairedDelimiterX\dualk[2]{\langle}{\rangle_2}{#1,#2}
\DeclarePairedDelimiterX\sdual[2]{[}{]}{#1,#2}
\DeclarePairedDelimiterX\sipa[2]{(}{)_{\!_A}}{#1\,\delimsize\vert\,#2}
\DeclarePairedDelimiterX\sipc[2]{(}{)_{\!_C}}{#1\,\delimsize\vert\,#2}
\DeclarePairedDelimiterX\sipab[2]{(}{)_{\!_{A+B}}}{#1\,\delimsize\vert\,#2}
\DeclarePairedDelimiterX\sipb[2]{(}{)_{\!_B}}{#1\,\delimsize\vert\,#2}
\newcommand{\anti}[1]{\bar{#1}'}
\newcommand{\opmatrix}[4]{\left(\begin{array}{cc}\!\! #1 &  #2\!\!\\ \!\! #3& #4\!\!\end{array}\right)}
\newcommand{\ta}{\widetilde{A}} 
\begin{document}
\sloppy 
\title[Generalized Krein--von Neumann extension]{Operators on anti-dual pairs:\\ Generalized Krein--von Neumann extension}

\author[Zs. Tarcsay]{Zsigmond Tarcsay 
}
\thanks{The corresponding author Zs. Tarcsay (tarcsay@cs.elte.hu) was supported by the ``For the Young Talents of the Nation'' scholarship program (NTP-NFT\"O-17) of the Hungarian Ministry of Human
Capacities.}

\address{%
Zs. Tarcsay \\ Department of Applied Analysis  and Computational Mathematics\\ E\"otv\"os Lor\'and University\\ P\'azm\'any P\'eter s\'et\'any 1/c.\\ Budapest H-1117\\ Hungary}

\email{tarcsay@cs.elte.hu}

\author[T. Titkos]{Tam\'as Titkos}
\thanks{T. Titkos was supported by the Hungarian National Research, Development and Innovation Office -- NKFIH (PD128374), and by the
J\'anos Bolyai Research Scholarship of the Hungarian Academy of Sciences.}

\address{T. Titkos \\ Alfr\'ed R\'enyi Institute of Mathematics, Hungarian Academy of Sciences\\ Re\'altanoda utca 13-15.\\ Budapest H-1053\\ Hungary\\ and BBS University of Applied Sciences\\ Alkotm\'any u. 9.\\ Budapest H-1054\\ Hungary }
\email{titkos.tamas@renyi.mta.hu}

\subjclass[2010]{Primary 47A20, 47B65, 46A20 Secondary  46K10, 46A22}

\keywords{Positive operator, anti-duality, factorization, operator extension, $^*$-algebra, positive functional, operator kernel}

\begin{abstract}  The main aim of this paper is to generalize the classical concept of positive operator, and to develop a general extension theory, which overcomes not only the lack of a  Hilbert space structure, but also the lack of a normable topology. The concept of anti-duality carries an adequate structure to define positivity in a natural way, and is still general enough to cover numerous important areas where the Hilbert space theory cannot be applied. Our running example -- illustrating the applicability of the general setting to spaces bearing poor geometrical features -- comes from noncommutative integration theory. Namely, representable extension of linear functionals of involutive algebras will be governed by their induced operators. The main theorem, to which the vast majority of the results is built, gives a complete and constructive characterization of those operators that admit a continuous positive extension to the whole space. Various properties such as commutation, or minimality and maximality of special extensions will be studied in detail.

\end{abstract}

\maketitle

\section{Introduction}

One of the most natural questions that arises when someone dealing with partially defined objects in mathematics is whether there exists an extension sharing some prescribed properties. A great many authors have studied abstract extension problems for operators on Hilbert spaces, that go at least back to M. G. Krein \cite{Krein} and J. von Neumann \cite{JvN}.  (For various different developments of their groundbreaking work see e.g. \cites{AG,AGLMS,AGMST,HMdS,HSdSW,Skau,SSZT,BN}, and the references therein.) The following extension problem was posed by P. R. Halmos \cite{Halmos}: Assume that a positive operator $A:D\to H$ is given, where $D$ is a linear subspace of the complex Hilbert space $\hil$. Positivity here means that $\sip{Ax}{x}\geq0$ for all $x\in D$. The question is: under what conditions can we guarantee the existence of an everywhere defined bounded positive extension $\widetilde{A}$ of $A$? Of course, if there is any then $A$ itself must be  bounded. Hence, extending it to the closure by continuity, we may suppose that $D$ is closed. Consider the matrix representation of $A$ with respect to the orthogonal decomposition $\hil=D\oplus D^\perp$
\begin{align*}
[A]=\opmatrix{A_{11}}{*}{A_{21}}{*},
\end{align*}
where $A_{11}:D\to D$ and $A_{21}:D\to D^{\perp}$ arise in the usual way,  whereas the second column (of symbols $*$) waits to be filled to obtain a positive operator. It is easy to see that every positive extension of $A$ has representation of the form 
\begin{align}\label{E:opmatrix}
[\widetilde{A}]=\opmatrix{A_{11}}{A_{21}^*}{A_{21}}{X},
\end{align}
where $A_{11}:D\to D$ and $X:D^{\perp}\to D^{\perp}$ are positive. So, extending $A$ to a positive operator $\ta$ is equivalent to find $X\geq0$ such that the operator matrix \eqref{E:opmatrix} is positive. (For a more general completion problem for block operators see \cite{BH}.) This form helps us to demonstrate also that such an extension need not exist even in the most simple case. Indeed, assume that $\hil$ is of the form $\hil=\kil\oplus\kil$ with a complex Hilbert space $K$ and assume that $A_{11}$ is the zero operator, while $A_{21}=A_{21}^*$ is any positive but nonzero bounded operator on $K$. Then an elementary calculation shows that there is no positive completion at all.

The main aim of this paper is to develop a general extension theory  which overcomes the problem of not having orthogonal decomposition when we drop the Hilbert space structure. This level of generality is indeed necessary in our considerations, because we intend to investigate extendibility of ``positive'' mappings acting on spaces without inner product. In order to introduce the appropriate analogues of standard operator classes, that cover the original Hilbert space setting, and is general enough to be applicable for objects like operator kernels and representable functionals, we are going to consider anti-dual pairs. That is, two appropriately chosen complex linear spaces intertwined by a sesquilinear and separating map, called anti-duality. This is a slight modification of the well known dual pair setting (see \cite{Schaefer}*{Chapter IV}), the only difference we make is the conjugate linearity in the second variable. Having an anti-duality at hand also allows us to introduce various topologies, and hence continuity and boundedness of maps acting between the spaces in question.

The organization of the paper is as follows. Section \ref{S: Preliminaries} contains a short overview of concepts that are needed in later sections of the paper. In particular, we are going to introduce the notions of positivity and symmetry of operators in context of anti-dual pairs. It will turn out that these operators are weakly continuous, which suggests that the most adequate topologies for our investigations are the weak topologies.

In Section \ref{S: MT} we present a quite general extension theorem, which can be considered as the main result of the paper. In fact, this result will serve as the base of our further investigations throughout. Roughly speaking, we are going to characterize (in both topological and algebraic ways) operators possessing positive extension to the whole space. In addition, the construction which is based on the ingenious paper of  Z. Sebesty\'en \cite{Sebestyen93} has some useful theoretical consequences, including an explicit formula for the obtained positive extension, as well as for its ``quadratic form''. Also, it will turn out that this extension is minimal in some sense, thus we shall call it Krein-von Neumann extension, in accordance with the classical literature. We emphasize that in this paper we restrict ourselves to continuous extensions, and thus the operators we are going to deal with are typically not densely defined. Hence, the maximal (Friedrichs) extension, apart from trivial cases, does not exist. Nevertheless, we will show that the set of positive extensions bounded by a fixed positive operator always possesses a maximal element. Closing that chapter we will also show how the abstract result can be used for positive definite operator functions. The special case when the anti-duality is the evaluation on the pair of a fixed Banach space and its conjugate topological dual will be considered in Section \ref{S: Banach}. On the one hand, we will obtain a strengthening of the main result of \cite{SSZT}, on the other hand, we will conclude that Halmos' original result follows indeed as an immediate consequence of our main theorem.
Finally, we apply our results to obtain representable positive extensions of linear functionals given on a left ideal of an involutive algebra.

\section{Preliminaries}\label{S: Preliminaries} 
In this section we collect all the necessary ingredients from the theory of topological vector spaces, anti-dual pairs, and their special linear operators which we are going to use throughout.
\subsection{Anti-dual pairs and related topologies} We start by recalling the notion of anti-duality, which is just a slight (and technical) modification of dual pairing. Although there is no crucial difference between these two notions, we choose anti-duality in order to stay formally as close as possible to the Hilbert space case. Let $E, F$ be complex vector spaces. A function $\dual{\cdot}{\cdot}:F\times E\to \dupC$ is called anti-duality if it is sesquilinear (that is to say, linear in the first argument and conjugate linear in the second one) and $\dual{\cdot}{\cdot}$ separates the points of $F$ and $E$ (that is to say, if $\dual{f}{x}=0$ for all $x\in E$ then $f=0_F$ and   if $\dual{f}{x}=0$ for all $f\in F$ then $x=0_E$). The triple $((E,F),\dual{\cdot}{\cdot})$ is called an anti-dual pair, and it is denoted shortly by $\dual{F}{E}$. Observe that if $\dual{F}{E}$ is an anti-dual pair then $\dual{E}{F}'$ is also an anti-dual pair where $\dual{\cdot}{\cdot}':E\times F\to\dupC$ is given by
\begin{equation}\label{E:dualvesszo}
    \dual{x}{f}':=\overline{\dual{f}{x}},\qquad x\in E,~ f\in F.
\end{equation}

The most natural anti dual pair is a linear space and a linear subspace of its conjugate algebraic dual, intertwined by the evaluation as anti-duality. In fact, every anti-dual pair can be written in the above form. Indeed, if $\dual{F}{E}$ is an anti-dual pair, then due to the identification \begin{equation*}
x\mapsto\varphi_x;\qquad\varphi_x(f):= \dual{f}{x}\qquad\mbox{for all}\quad f\in F,
\end{equation*}
$E$ may be regarded as a linear subspace of $F^*$, the algebraic dual of $F$. Similarly,  due to the mapping  
\begin{equation*}
    f\mapsto\psi_f;\qquad\psi_f(x):=\dual{f}{x}\qquad\mbox{for all}\quad x\in E,
\end{equation*}
$F$ can be identified as a linear subspace of $\bar{E}^*$, the algebraic anti-dual space of $E$. Our prototype of anti-dual pairs is the system $((H,H),\sip{\cdot}{\cdot})$ where $H$ is a Hilbert space with inner product $\sip{\cdot}{\cdot}$. This particular anti-dual pair has the useful feature that $H$ can be identified with its topological dual along the maps $x\mapsto\varphi_x$ and $f\mapsto\psi_f$, according to the Riesz representation theorem.  A similar feature is obtained in the general setting if we endow $E$ and $F$ with appropriate topologies.  For this purpose the most natural at hand are the \emph{weak topologies} $\sigef$ and $\sigfe$ on $E$ and $F$, respectively: $\sigef$ is the smallest topology making $\varphi_x$ continuous for all $x\in E$, and similarly, $\sigfe$ is the smallest topology on $F$ such that all the functionals of the form $\psi_f$ $(f\in F)$ are continuous. Both $(E,\sigef)$ and $(F,\sigfe)$ are locally convex Hausdorff spaces such that $\bar{E}'=F$ and $F'=E$, where $\bar{E}'$ and $F'$ refer to the topological anti-dual and dual spaces of $E$ and $F$, respectively. We call an anti dual pair $\dual FE$ $w^*$-sequentially complete, if $F$ is $\sigfe$ sequentially complete. In order to demonstrate that being $w^*$-sequentially complete does not mainly depend on the spaces but on the way of how to intertwine them, we cite an example of Wilansky. 

\begin{example}
Let us denote by $c$ and $\ell^1$ the spaces of convergent and absolute summable complex sequences, respectively.  Define an anti-duality by
\begin{equation*}
    \duale xy:=\sum\limits_{n=1}^{\infty}{x}(n)\overline{{y}(n)},\qquad x\in \ell^1,~y\in c.
\end{equation*}
Clearly, $\duale{\ell^1}{c}$ is an anti-dual pair, which is not $w^*$-sequentially complete, as the sequence of Kronecker delta's ($\delta^n(n)=1$ and $\delta^n(k)=0$ if $k\neq n$) is a $w^*$-Cauchy but not $w^*$-convergent sequence in $\ell^1$. On the other hand, the anti-dual pair $\dualk{\ell^1}{c}$ with
\begin{equation*}
    \dualk xy:=x(1)\cdot\overline{\lim_{n\to\infty}y(n)}+\sum\limits_{n=1}^{\infty}{x}(n+1)\overline{{y}(n)}
\end{equation*}
is $w^*$-sequentially complete according to the Banach-Steinhaus theorem  (for the details see Example 7 of 8-2 and Problem 108-109 of 8-2 in \cite{Wilansky}).
\end{example}
We remark also here that a linear space and its algebraic conjugate dual intertwined by evaluation is always $w^*$-(sequentially) complete. A Banach space and its topological conjugate dual form also a $w^*$-sequentially complete anti-dual pair, according to the Banach-Steinhaus theorem. More generally,  a barrelled space and its topological conjugate dual form a quasi-complete, and hence a $w^*$-sequentially complete anti-dual pair. Nevertheless we mention that the   a Banach space is $w^*$-complete if and only if the space is finite dimensional.

Besides the weak topology, the strong topology will be particularly important for our investigations. Recall that if $\dual{F}{E}$ is an anti-dual pair, then the polar of $A\subseteq F$ is the absolutely convex set
\begin{equation*}
    A^{\circ}=\set[\big]{x\in E}{\forall a\in A:~|\dual{a}{x}|\leq 1}.
\end{equation*}
The strong topology $\beta(E,F)$ on $E$ is obtained as the linear topology induced by polars of $\sigfe$-bounded subsets of $F$. Similarily, the strong topology $\beta(F,E)$ on $F$ is induced by polars of $\sigef$-bounded subsets of $E$. Now we turn to investigate special linear operators acting between two sides of anti-dual pairs.

\subsection{Positive- and symmetric operators on anti-dual pairs}
 If an anti-dual pair $\dual{F}{E}$ is given, we will use the short notation $A:E\supseteq \dom A\to F$ for \emph{linear} operators acting on a subspace $\dom A$ of $E$ with values in $F$. We prefer this setting instead of considering duality with conjugate linear operators as in \cite{Alpay}. An operator $A:E\supseteq\dom A\to F$ is called \emph{positive} if it satisfies
\begin{equation*}
    \dual{Ax}{x}\geq0\qquad \textrm{for all $x\in\dom A$},
\end{equation*}
and \emph{symmetric} if 
\begin{equation*}
    \dual{Ax}{y}=\overline{\dual{Ay}{x}}\qquad \textrm{for all $x,y\in\dom A$}.
\end{equation*}
It is obvious that these are direct generalizations of the well-known notions of Hilbert space theory. The main advantage is that this setting allows us to handle structures without Hilbert space structure analogously. As the next example will illustrate, operators on anti-dual pairs appear very naturally for example in noncommutative integration theory.

\begin{example}
Let $\alg$ be a  ${}^*$-algebra with algebraic conjugate-dual $\bar{\alg}^*$, and let $\M\subseteq\alg$ be a left-ideal. Then $\dual{\bar{\alg}^*}{\alg}$ is an anti-dual pair with $\dual{f}{a}:=f(a)$. If a positive linear functional $f:\M\to\mathbb{C}$ is given, we can associate a positive operator $A_f:\M\to\bar{\alg}^*$ as $\dual{A_fa}{b}:=f(b^*a)$. It will turn out later that positive extendibility of $f$ to the whole algebra can be characterized by means of $A_f$. Furthermore, the canonical extension of $f$ itself will be gained from the canonical extension of $A_f$. 
\end{example}
Recall one of the main advantages of weak topology, namely that a linear operator $T$ acting on a topological vector space $(V,\mathscr{T}_V)$ with values in $(F,\sigfe)$ is continuous if and only if the linear functionals
\begin{equation}\label{F: adj}
    \vartheta_x:V\to\dupC;\qquad\vartheta_x(h):=\dual{Th}{x}
\end{equation}
are continuous for each $x\in E$. For the sake of simplicity we introduce the following terminology: if two anti-dual pairs $\duale{F_1}{E_1}$ and $\dualk{F_2}{E_2}$ are given, we will call a map $T : E_1 \to E_2$ \emph{weakly continuous} if $T$ is $\sigma(E_1,F_1)$-$\sigma(E_2,F_2)$ continuous. The set of weakly continuous linear operators $T:E_1\to E_2$ is denoted by $\mathscr{L}(E_1;E_2)$. By replacing $\dualk{F_2}{E_2}$ with $\dualk{E_2}{F_2}'$ (see \eqref{E:dualvesszo}) one can characterize weak continuity of an operator $T:E_1\to F_2$. Indeed, according to \eqref{F: adj}, $T$ is weakly continuous if and only if for all $x_2\in E_2$ there exists  $f_1\in F_1$ such that
\begin{equation}\label{F: adj identity}
    \dualk{Tx_1}{x_2}=\overline{\duale{f_1}{x_1}}~(=\duale{x_1}{f_1}')\qquad\mbox{for all}\quad x_1\in E_1.
\end{equation}
The (necessarily weakly continuous) operator $T^*:E_2\to F_1$ satisfying \begin{equation*}
    \dualk{Tx_1}{x_2}=\overline{\duale{T^*x_2}{x_1}},\qquad  x_1\in E_1,x_2\in E_2.
\end{equation*} is called the adjoint of $T$. In particular, the adjoint $A^*$ of an operator $A\in\lef$ belongs again to $\lef$ and satisfies $\dual{Ax}{y}=\overline{\dual{A^*y}{x}}$ for all $x,y\in E$.
Hence it makes sense to speak about self-adjointness $A=A^*$ of an operator $A\in\lef$. An everywhere defined symmetric operator (that is, an operator $S:E\to F$ such that $\dual{Sx}{y}=\overline{\dual{Sy}{x}}, x,y\in E)$ is automatically weakly continuous, and hence self-adjoint. If $A:E\supseteq \dom A\to F$ is an  operator such that $\dual{Ax}{x}$ is real for all $x\in \dom A$ then the sesquilinear form $\tform_A(x,y):=\dual{Ax}{y}$ ($x,y\in \dom A$) is hermitian, thus  $A$ is symmetric. Indeed, $\dual{Ax}{y}=\tform_A(x,y)=\overline{\tform_A(y,x)}=\overline{\dual{Ay}{x}}$ holds for all $x,y\in \dom A$. Let us summarize these observations in the following proposition.
\begin{proposition}\label{P:Hellinger}
    Let $\dual FE$ be an anti-dual pair.
    \begin{enumerate}[\upshape (a)]
      \item Every symmetric operator $A:E\to F$ is weakly continuous  and self-adjoint.
      \item  An operator $A:E\to F$ is symmetric if and only if $\dual{Ax}{x}$ is real for each $x\in E$.
      \item Every positive operator $A:E\to F$ is weakly continuous and self-adjoint.
    \end{enumerate}
\end{proposition}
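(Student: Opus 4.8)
The plan is to establish (b) and (a) directly from the two remarks preceding the statement, and then to read off (c); I would treat them in the order (b), (a), (c). I would begin with (b) by polarization. The assignment $\tform_A(x,y):=\dual{Ax}{y}$ defines a sesquilinear form on $E\times E$, linear in the first variable (because $A$ is linear and the anti-duality is linear there) and conjugate linear in the second. If $\tform_A(x,x)=\dual{Ax}{x}$ is real for every $x$, then the complex polarization identity, which recovers $\tform_A(x,y)$ from the four diagonal values $\tform_A(x\pm y,x\pm y)$ and $\tform_A(x\pm iy,x\pm iy)$, forces $\tform_A(y,x)=\overline{\tform_A(x,y)}$, and this is precisely symmetry of $A$. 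The converse is trivial: putting $y=x$ in $\dual{Ax}{y}=\overline{\dual{Ay}{x}}$ gives $\dual{Ax}{x}=\overline{\dual{Ax}{x}}\in\dupR$.

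For (a), let $A:E\to F$ be symmetric. Specializing the weak-continuity criterion \eqref{F: adj identity} to the single pair $\dual FE$, the operator $A$ is weakly continuous exactly when for each $y\in E$ there is some $g\in F$ with $\dual{Ax}{y}=\overline{\dual{g}{x}}$ for all $x\in E$; this is the requirement that the linear functional $x\mapsto\dual{Ax}{y}$ be $\sigef$-continuous, i.e. that its complex conjugate lie in the topological anti-dual $\anti{E}=F$. For a symmetric $A$ the witness comes for free, namely $g:=Ay$, since symmetry reads $\dual{Ax}{y}=\overline{\dual{Ay}{x}}$. Hence $A\in\lef$ and its adjoint $A^*$ exists, satisfying $\dual{Ax}{y}=\overline{\dual{A^*y}{x}}$ for all $x,y$. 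Comparing this with the symmetry identity yields $\dual{A^*y}{x}=\dual{Ay}{x}$ for every $x$; since the anti-duality separates the points of $F$, one concludes $A^*y=Ay$ for all $y$, that is, $A^*=A$.

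Part (c) is then immediate: a positive operator satisfies $\dual{Ax}{x}\ge0$, in particular $\dual{Ax}{x}\in\dupR$, so $A$ is symmetric by (b) and therefore weakly continuous and self-adjoint by (a). The one step demanding genuine care is the continuity criterion invoked in (a): one must keep the conjugate-linear-versus-linear bookkeeping straight and use that a \emph{linear} functional on $E$ is $\sigef$-continuous exactly when its complex conjugate belongs to $\anti{E}=F$. This is what converts the bare symmetry relation into the representation required by \eqref{F: adj identity}; everything else is purely formal.
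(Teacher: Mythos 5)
Your proof is correct and takes essentially the same route as the paper: the paper also obtains (b) from the standard fact that a sesquilinear form with real diagonal is hermitian (polarization), gets (a) by noting that symmetry supplies exactly the representation $\dual{Ax}{y}=\overline{\dual{Ay}{x}}$ demanded by the weak-continuity criterion \eqref{F: adj identity} (whence $A^*=A$ by separation of points), and then reads off (c) from (a) and (b). Your write-up merely makes explicit the polarization identity and the adjoint comparison that the paper leaves implicit.
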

As our main interest in this paper lies in positive extensions, we present a prototype of positive operators.
In rest of the paper we are mainly interested in positive operators on an anti-dual pair. In the next example we present the prototype of such positive operators.
\begin{example}\label{Ex:prototype}
    Let $\dual FE$ be an anti-dual pair and $\hil$ a Hilbert space. If $T:E\to\hil$ is a $\sigma(E,F)$-$\sigma(H,H)$ continuous linear operator then its adjoint $T^*:\hil\to F$ is $\sigma(H,H)$-$\sigma(F,E)$ continuous so that $T^*T\in\lef$ is positive:
    \begin{equation*}
        \dual{T^*Tx}{x}=\sip{Tx}{Tx}\geq0,\qquad x\in E.
    \end{equation*}
\end{example}
We will see later that, under some natural conditions on $F$, each positive operator $A\in\lef$ possesses a factorization of the form $A=T^*T$ with a suitable $T$ and $H$ of Example \ref{Ex:prototype}.

\section{Main theorem -- Extensions of positive operators} \label{S: MT}

The central problem of this section is to provide necessary and sufficient conditions under which a linear operator $A:E\supseteq \dom A\to F$ possesses a positive extension to the whole $E$. The following set associated to $A$ will play a key role in our treatment:
\begin{equation}
W(A):=\set{Ax}{x\in \dom A, \dual{Ax}{x}\leq 1}\subseteq F.
\end{equation}
The construction below is motivated by the work of Sebesty\'en \cite{Sebestyen93}. Here we emphasize that no topological conditions on $\dom A$ are assumed.
\begin{theorem}\label{T:Krein-von Neumann}
Let $\dual{F}{E}$ be a $w^*$-sequentially complete anti-dual pair and let $A:E\supseteq\dom A\to F$ be a linear operator with domain $\dom A$, which is assumed to be only a linear subspace. Then the following statements are equivalent.

\begin{enumerate}[\upshape (i)]
\item There is a positive operator $\ta\in\lef$ extending $A$,
\item $W(A)$ is $\beta(F,E)$-bounded in $F$,
\item $W(A)$ is $\sigma(F,E)$-bounded in $F$,
\item To any $y$ in $E$ there is $M_y\geq0$ such that 
\begin{equation}\label{E:M_y}
 \abs{\dual{Ax}{y}}^2\leq M_y\dual{Ax}{x}\qquad \textrm{for all $x\in\dom A$.}
\end{equation}
\end{enumerate}
If one (and hence all) of the above conditions is satisfied then there exists a distinguished extension called the Krein-von Neumann extension
which is minimal in the following sense: $A_N\leq \widetilde{A}$ holds for any positive extension $\widetilde{A}\in\lef$ of $A$.
\end{theorem}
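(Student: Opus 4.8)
The plan is to realise the extension through an auxiliary Hilbert space, in the spirit of the factorisation in Example~\ref{Ex:prototype}. Assume the equivalent conditions hold; in particular (iv) is at our disposal, and since (iv) forces $\dual{Ax}{x}\ge0$ on $\dom A$, the operator $A$ is positive and hence symmetric on its domain. First I would attach to the positive semidefinite hermitian form $\tform_A(x,y):=\dual{Ax}{y}$ on $\dom A$ its Hilbert space $\hila$: factoring out the isotropic subspace $\set{x\in\dom A}{\dual{Ax}{x}=0}$ and completing yields $\hila$ together with a canonical linear map $J_A:\dom A\to\hila$ of dense range satisfying $\sipa{J_Ax}{J_Ay}=\dual{Ax}{y}$. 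For a fixed $y\in E$, condition (iv) says precisely that $J_Ax\mapsto\dual{Ax}{y}$ is a bounded linear functional on $J_A(\dom A)$; the Riesz representation theorem then provides a unique $Ty\in\hila$ with $\dual{Ax}{y}=\sipa{J_Ax}{Ty}$ for all $x\in\dom A$, and conjugate linearity of $\dual{\cdot}{\cdot}$ in the second slot makes $y\mapsto Ty$ linear.

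Next I would verify that $T\in\mathscr{L}(E;\hila)$ and identify its adjoint. Against the dense family $J_A(\dom A)$ the weak-continuity criterion \eqref{F: adj identity} holds immediately, since $\sipa{Ty}{J_Ax_0}=\overline{\dual{Ax_0}{y}}$ exhibits $Ax_0\in F$ as the representing vector. Extending this to all of $\hila$ is where $w^*$-sequential completeness is indispensable: for $\xi\in\hila$ pick $x_n\in\dom A$ with $J_Ax_n\to\xi$; then $\dual{Ax_n}{y}=\sipa{J_Ax_n}{Ty}$ converges for every $y$, so $(Ax_n)$ is $\sigma(F,E)$-Cauchy and thus $\sigma(F,E)$-converges to some element of $F$, which we call $T^*\xi$. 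This defines the adjoint $T^*:\hila\to F$ with $T^*J_Ax=Ax$. Setting $A_N:=T^*T$, Example~\ref{Ex:prototype} makes $A_N\in\lef$ positive; and because $\sipa{J_Ax}{Tx_0}=\dual{Ax}{x_0}=\sipa{J_Ax}{J_Ax_0}$ for every $x\in\dom A$ forces $Tx_0=J_Ax_0$, we obtain $A_Nx_0=T^*J_Ax_0=Ax_0$, so $A_N$ extends $A$. This is the Krein--von Neumann extension.

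For minimality I would first extract a variational formula from the density of $J_A(\dom A)$. Since $\dual{A_Nx}{x}=\sipa{Tx}{Tx}=\lVert Tx\rVert_{\hila}^2$ and the norm of a vector is computed by testing against the unit ball of a dense subspace,
\[
\lVert Tx\rVert_{\hila}=\sup\set{\abs{\dual{Az}{x}}}{z\in\dom A,\ \dual{Az}{z}\le1},
\]
so that $\dual{A_Nx}{x}=\sup\set{\abs{\dual{Az}{x}}^2}{z\in\dom A,\ \dual{Az}{z}\le1}$. Now let $\ta\in\lef$ be an arbitrary positive extension of $A$. By Proposition~\ref{P:Hellinger} it is self-adjoint, so $(x,y)\mapsto\dual{\ta x}{y}$ is a positive semidefinite hermitian form and the Cauchy--Schwarz inequality yields $\abs{\dual{\ta z}{x}}^2\le\dual{\ta z}{z}\dual{\ta x}{x}$. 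For $z\in\dom A$ with $\dual{Az}{z}\le1$ we have $\ta z=Az$ and $\dual{\ta z}{z}=\dual{Az}{z}\le1$, hence $\abs{\dual{Az}{x}}^2\le\dual{\ta x}{x}$; taking the supremum over such $z$ gives $\dual{A_Nx}{x}\le\dual{\ta x}{x}$ for all $x\in E$, i.e.\ $A_N\le\ta$.

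The main obstacle is precisely the passage from the dense subspace $J_A(\dom A)$ to all of $\hila$ in the construction of $T^*$: without completeness the $\sigma(F,E)$-Cauchy sequence $(Ax_n)$ need not converge in $F$, and this is exactly what the $w^*$-sequential completeness hypothesis supplies (the approximating nets can be taken to be sequences, matching that hypothesis). A secondary point is to justify carefully the ``norm equals supremum over the dense unit ball'' step underlying the variational formula; beyond these, the argument rests only on the automatic positivity of $T^*T$ and on the Cauchy--Schwarz inequality for positive self-adjoint operators on an anti-dual pair.
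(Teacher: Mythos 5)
Your construction and minimality argument are correct, and they are in essence the paper's own: your $T$ is the paper's $J^*$ (obtained there as the adjoint of the embedding $J$, obtained by you directly from the Riesz representation theorem), your $T^*$ is the paper's $J$ (extended there from $\ran A$ to $\hila$ by $w^*$-sequential completeness, built by you through the same completeness applied to approximating sequences), and your $A_N=T^*T$ is the paper's $JJ^*$. Likewise, your minimality proof via the variational formula for $\dual{A_Nx}{x}$ and the Cauchy--Schwarz inequality is the paper's proof.

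The genuine gap is that you never prove the equivalence of (i)--(iv), which is the actual assertion of Theorem \ref{T:Krein-von Neumann}. Your argument opens by assuming ``the equivalent conditions hold'', i.e., it presupposes exactly what must be shown, and conditions (ii) and (iii) --- the $\beta(F,E)$- and $\sigma(F,E)$-boundedness of the set $W(A)$ --- never enter your proof at all; what you establish is the single implication (iv) $\Rightarrow$ (i) together with minimality. To finish, a cycle through the remaining conditions must be closed. Two of the missing implications are cheap: (ii) $\Rightarrow$ (iii) holds because $\beta(F,E)$ is finer than $\sigma(F,E)$, and (iii) $\Rightarrow$ (iv) holds because $\sigma(F,E)$-boundedness of $W(A)$ says precisely that $M_y:=\sup\set{\abs{\dual{Ax}{y}}^2}{x\in\dom A,\ \dual{Ax}{x}\leq 1}$ is finite for every $y\in E$. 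The substantive missing step is (i) $\Rightarrow$ (ii), which requires an idea absent from your proposal: given a positive extension $\ta\in\lef$, one has $W(A)\subseteq W(\ta)$, and factorizing $\ta=JJ^*$ through its auxiliary Hilbert space one gets $W(\ta)\subseteq J\langle B\rangle$ with $B$ the closed unit ball of that space; since $B$ is weakly compact and $J$ is weakly continuous, $J\langle B\rangle$ is absorbed by the polar $B_0^{\circ}$ of every $\sigma(E,F)$-bounded set $B_0\subseteq E$ (the absorbing lemma), which is exactly $\beta(F,E)$-boundedness of $W(A)$. Note that (i) $\Rightarrow$ (iv) would follow cheaply from the Cauchy--Schwarz step you already use in the minimality argument (with $M_y=\dual{\ta y}{y}$), but even that shortcut leaves (ii) and (iii) untouched, so as written the theorem is only partially proved.
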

\begin{proof} We start by proving that (iv) implies (i). Let us consider the following inner product on the range space $\ran {A}$ of ${A}$: 
\begin{equation}
\sipa{{A}x}{{A}x'}:=\dual{{A}x}{x'},\qquad x,x'\in\dom {A}.
\end{equation}
First of all note that $\sipa{\cdot}{\cdot}$ is well defined: if $Ax=Ax'$ and $Ay=Ay'$, then
\begin{align*}
\dual{Ax}{y}=\dual{Ax'}{y}=\overline{\dual{Ay}{x'}}=\overline{\dual{Ay'}{x'}}=\dual{Ax'}{y'}.
\end{align*}
Furthermore, if $\dual{{A}x}{x}=0$ for some $x\in\dom {A}$ then (iv) implies $\dual{{A}x}{y}=0$ for all $y\in E$, hence ${A}x=0$, which means that $(\ran{A},\sipa{\cdot}{\cdot})$ is a pre-Hilbert space. Consider now the completion $\hil_{A}$ of $\ran {A}$ equipped with that inner product. Let us consider the densely defined canonical embedding operator
\begin{equation}\label{E:J}
J:H_A\supseteq\ran A\to F;\quad J({A}x):={A}x,\qquad x\in\dom {A}.
\end{equation}
For any $y$ in $E$, we see by (iv) that 
\begin{equation*}
\abs{\dual{{A}x}{y}}^2\leq M_y\sipa{{A}x}{{A}x},\qquad x\in\dom {A},
\end{equation*}
hence the linear functional
\begin{align*}
\hil_{A}\supseteq \ran {A}\ni {A}x\mapsto \dual{{A}x}{y}, 
\end{align*}
is continuous with respect to the norm induced by $\sipa{\cdot}{\cdot}$. That means that $J$ is $\sigma(\ran {A},\hil_{A})$-$\sigfe$ continuous and thus it admits a unique continuous extension to $\hila$ by $w^*$-sequentially completeness of $F$. As it will not cause ambiguity, this extension will be denoted by the same symbol $J\in\mathscr{L}(\hil_{A} ;F)$. The adjoint operator $J^*\in\mathscr{L}(E ;\hil_{A})$ fulfills 
\begin{align*}
\sipa{{A}x'}{J^*x}=\dual{J({A}x')}{x}=\dual{{A}x'}{x}=\sipa{{A}x'}{{A}x}
\end{align*}
for every $x,x'\in\dom {A}$, whence we gain the useful identity 
\begin{equation}\label{E:J*}
J^*x={A}x,\qquad x\in\dom {A}.
\end{equation}
And here we have arrived: for any $x$ in $\dom {A}$ we find that $JJ^*x=J({A}x)={A}x$, hence  $JJ^*\in\lef$ is a positive extension of $A$.
The extension obtained by this construction will be denoted by $A_N$.
 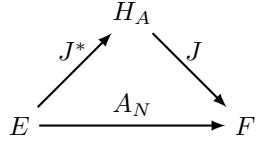
\begin{figure}[H]
\begin{tikzpicture}
\draw (10,0) node (A) {$E$};
\draw (13,0) node (B) {$F$};
\draw[thick,-latex] (A) -- (B)
 node[pos=0.5,above,sloped] {$A_N$};
\draw (11.5,1.5) node (C) {$\hila$};
\draw[thick,-latex] (A) -- (C)
 node[pos=0.5,above] {$J^*$\,};
\draw[thick,-latex] (C) -- (B)
 node[pos=0.5,above] {~$J$};
\end{tikzpicture}
\caption{Factorization
of the Krein-von Neumann extension.}
\end{figure}
If $A\in\lef$ is any positive extension of $A$ then clearly $  W(A) \subseteq  W(\ta)$. Hence, to see that (i) implies (ii) we may assume that $A$ is everywhere defined, i.e., $A\in\lef$. We have to show that  $ W(A)$ is bounded in the $\beta(F,E)$ topology, that is, $B^\circ$ absorbs $ W(A)$  for each $\sigma(E,F)$-bounded subset $B$ of $E$.  Note that we have $A=JJ^*$ and that $W(A) \subseteq J\langle B_{A}\rangle$, where $B_{A}$ refers to the closed unit ball in $\hil_{A}$. Since $B_{A}$ is weakly compact, $J\langle B_{A}\rangle$  is absorbed by the barrel $B^{\circ}$,  according to the absorbing lemma.  Hence $B^{\circ}$ absorbs $ W(A)$ as well, proving that (i)$\Rightarrow$ (ii). 

As the topology $\beta(F,E)$ is stronger then $\sigfe$, (ii) implies (iii) obviously.

Assume now that $W(A)$ is weak-* bounded in $F$ and take $y$ from $E$. By continuity we see that $\dual{\cdot}{y}$ is bounded on $ W(A)$ whence we conclude  that
\begin{align*}
M_y:=\sup\set[\big]{\abs{\dual{Ax}{y}}^2}{x\in \dom A, \dual{Ax}{x}\leq 1}<+\infty,
\end{align*}
which clearly implies (iv). The equivalence of (i)-(iv) has been proved.

In order to prove minimality of $A_N$, we are going to show first that
\begin{align}\label{E:ANyy-2}
\dual{A_Ny}{y}=\sup\set[\big]{\abs{\dual{Ax}{y}}^2}{x\in\dom A, \dual{Ax}{x}\leq 1},\qquad y\in E.
\end{align}
Indeed, using density of $\ran A$ in $\hila$, we conclude for all $y\in E$ that 
\begin{align*}
\dual{JJ^*y}{y}=\sipa{J^*y}{J^*y} 
&=\set[\big]{\abs{\sipa{Ax}{J^*y}}^2}{x\in\dom A, \sipa{Ax}{Ax}\leq 1}\\
&=\sup\set[\big]{\abs{\dual{Ax}{y}}^2}{x\in\dom A, \dual{Ax}{x}\leq 1}.
\end{align*}
 Consider now a positive extension $\widetilde{A}\in\lef$ of $A$. Then
\begin{align*}
\dual{JJ^*y}{y}&=\sup\set[\big]{\abs{\dual{Ax}{y}}^2}{x\in\dom A, \dual{Ax}{x}\leq 1}\\
               &\leq \sup\set[\big]{\abs{\dual{\widetilde{A}x}{y}}^2}{x\in E, \dual{\widetilde{A}x}{x}\leq 1}\\
               &\leq \sup\set[\big]{\dual{\widetilde{A}x}{x}\dual{\widetilde{A}y}{y}}{x\in E, \dual{\widetilde{A}x}{x}\leq 1}\\
               &=\dual{\widetilde{A}y}{y},
\end{align*}
holds for any $y\in E$. Hence $JJ^*\leq  \widetilde{A}$, indeed.  
\end{proof}
For the sake of brevity and clarity we introduce now a short term to express that  $\dual{F}{E}$ is a given anti-dual pair where the topological space $(F,\sigfe)$ is sequentially complete, $\dom A$ is a linear subspace of $E$, and $A:\dom A\to F$ is a linear operator that satisfies one (and hence all) of the properties (i)-(iv) in Theorem \ref{T:Krein-von Neumann}. Following Halmos' terminology we will say shortly that \emph{$A$ is a subpositive operator on the $w^*$-sequentially complete anti-dual pair $\dual{F}{E}$}.
Observe that  \eqref{E:ANyy-2}  provides a useful  explicit formula for the ``quadratic form'' of $A_N$. From the density of $\ran A$ in $\hila$ we can establish another one, namely, 
\begin{align*}
0&=\inf\set{\sipa{J^*y-Ax}{J^*y-Ax}}{x\in\dom A}\\
 &=\dual{JJ^*y}{y}+\inf \set[\big]{-\dual{Ax}{y}-\overline{\dual{Ax}{y}}+\dual{Ax}{x}}{x\in\dom A}
\end{align*}
for every $y\in E$,
which gives us 
\begin{align}\label{E:ANyy-1}
\dual{A_Ny}{y}&=\sup\set[\big]{\dual{Ax}{y}+\overline{\dual{Ax}{y}}-\dual{Ax}{x}}{x\in\dom A},\qquad y\in E,
\end{align}
after rearrangement.

The simple example below demonstrates that $w^*$-sequentially completeness of $F$ was really essential in the main theorem.
\begin{example}
Let $\hil$ be a Hilbert space and let $A$ be an unbounded positive self-adjoint operator in $\hil$, with (dense) domain $\dom A$. Let $E$ denote the domain of $A^{1/2}$, i.e., $E:=\dom A^{1/2}$. Then clearly, $\dual{H}{E}$ forms and anti-dual pair with respect to the duality induced by the inner product, but $H$ is not sequentially complete with respect to the corresponding weak topology. It is readily seen that $A:\dom A\to \hil$ fulfills condition (iv) of Theorem \ref{T:Krein-von Neumann}: indeed, for $y\in E$, 
\begin{align*}
\abs{\sip{Ax}{y}}^2=\abs{\sip{A^{1/2}x}{A^{1/2}y}}^2\leq \|A^{1/2}y\|^2\sip{Ax}{x}, \qquad x\in \dom A.
\end{align*}
Although condition (iv) is satisfied, the implication of Theorem \ref{T:Krein-von Neumann} does not remain true because of the lack of $w^*$-sequentially completeness. Indeed, assume that $A$ extends to a positive operator $\widetilde{ A}:E\to\hil$. Recall that a self-adjoint operator may not have any proper symmetric extension, hence $A=\widetilde{A}$ and, in particular, $\dom A=\dom A^{1/2}$. But thus is impossible because $\dom A\subsetneqq \dom A^{1/2}$ whenever $A$ is unbounded (see \cite{TZS:closedrange}*{Corollary 2.4}).
\end{example}
 
Notice that the set of positive extensions of a given positive operator $A$ has no maximal element (unless $\dom A$ is dense): for example, in the trivial case when $\dom A=\{0\}$, every positive operator is an extension of $A$. However, the next theorem says that we will get a maximum among positive extensions, bounded by a positive operator $B$. 
\begin{theorem}\label{T:max-extension}
Let $A$ be a subpositive operator on the $w^*$-sequentially complete anti-dual pair $\dual{F}{E}$. Let $B\in \lef$ be a positive operator such that $A_N\leq B$, then there exists a positive operator $\AmaxB\in\lef$, $\AmaxB\leq B$ such that for every positive extension $\widetilde A\in\lef$ of $A$, $0\leq \widetilde A\leq B$, one has $\widetilde A\leq  \AmaxB$. In other words, 
\begin{equation*}
\AmaxB=\max\set{\widetilde A\in \lef }{0\leq \widetilde A\leq B, A\subset \widetilde A}.
\end{equation*}
Furthermore, a positive operator $0\leq \widetilde A\leq B$ is an extension of $A$ if and only if $A_N\leq \widetilde A \leq \AmaxB$:
\begin{equation}\label{E:[AN,AB]}
[A_N,\AmaxB]=\set{\widetilde A\in \lef }{0\leq \widetilde A\leq B, A\subset \widetilde A}
\end{equation}
\end{theorem}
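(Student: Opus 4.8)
\section*{Proof proposal}

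The plan is to strip the problem down to the construction of a \emph{shorted operator} on an auxiliary Hilbert space and to read $\AmaxB$ off from there. The crucial preliminary observation is that the lower bound is automatic: by the minimality part of Theorem \ref{T:Krein-von Neumann}, every positive extension $\widetilde{A}\in\lef$ of $A$ already satisfies $A_N\le\widetilde{A}$. Hence, writing $C:=B-A_N$ (which lies in $\lef$ and is positive since $A_N\le B$ by hypothesis) and setting $R:=\widetilde{A}-A_N$, I would first establish the identification
\begin{equation*}
\set{\widetilde{A}\in\lef}{0\le\widetilde{A}\le B,\ A\subset\widetilde{A}}
=\set[\big]{A_N+R}{R\in\lef,\ 0\le R\le C,\ R|_{\dom A}=0}.
\end{equation*}
Indeed, $\widetilde{A}\le B$ reads $R\le C$, positivity of $\widetilde{A}$ together with $A_N\ge0$ gives $R\ge0$, and since $A_N$ restricts to $A$ on $\dom A$, the condition $A\subset\widetilde{A}$ is equivalent to $Rx=0$ for all $x\in\dom A$; the reverse inclusion is checked in the same way. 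Thus it suffices to produce the \emph{largest} positive $R\le C$ that annihilates $\dom A$.

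To build this maximal $R$ I would factor $C$. Applying the factorization of Theorem \ref{T:Krein-von Neumann} to the everywhere-defined positive operator $C$ (whose only positive extension is itself, so that $C=C_N$) yields a Hilbert space $\hilc$ and an operator $J_C\in\mathscr{L}(\hilc;F)$ with $C=J_CJ_C^*$. A Douglas-type factorization then shows that the positive operators $R$ with $0\le R\le C$ are precisely those of the form $R=J_CDJ_C^*$ with a positive contraction $D$ on $\hilc$; this is the anti-dual-pair analogue of the classical lemma, proved by factoring $R=J_RJ_R^*$, using $\|J_R^*x\|^2=\dual{Rx}{x}\le\dual{Cx}{x}=\|J_C^*x\|^2$ to obtain a contraction $W$ with $J_R^*=WJ_C^*$, and setting $D:=W^*W$. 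For such an $R$ and $x\in\dom A$ one has $\dual{Rx}{x}=\|D^{1/2}J_C^*x\|^2$, so $R|_{\dom A}=0$ is equivalent to $DJ_C^*x=0$ for every $x\in\dom A$, i.e.\ to $D|_N=0$, where $N:=\overline{\set{J_C^*x}{x\in\dom A}}\subseteq\hilc$. Writing $P$ for the orthogonal projection of $\hilc$ onto $N$, the largest positive contraction killing $N$ is $I-P$, and since $R\mapsto J_CDJ_C^*$ is order preserving it follows that $R_{\max}:=J_C(I-P)J_C^*$ is the maximum of the feasible set. I would therefore define
\begin{equation*}
\AmaxB:=A_N+J_C(I-P)J_C^*,
\end{equation*}
which by construction satisfies $\AmaxB\le A_N+C=B$, extends $A$, and dominates every member of the set, giving the asserted maximality.

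Finally, for the interval description \eqref{E:[AN,AB]} the inclusion $\subseteq$ is exactly the two bounds $A_N\le\widetilde{A}\le\AmaxB$ just obtained. For $\supseteq$, suppose $A_N\le\widetilde{A}\le\AmaxB$; then $\widetilde{A}\le\AmaxB\le B$ and $\widetilde{A}\ge A_N\ge0$, so only $A\subset\widetilde{A}$ remains. But for $x\in\dom A$ both $A_N$ and $\AmaxB$ agree with $A$, whence $0\le\dual{(\widetilde{A}-A_N)x}{x}\le\dual{(\AmaxB-A_N)x}{x}=0$, and the Cauchy--Schwarz inequality for the positive form $\dual{(\widetilde{A}-A_N)\,\cdot}{\cdot}$ forces $(\widetilde{A}-A_N)x=0$, i.e.\ $\widetilde{A}x=Ax$. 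I expect the main obstacle to be the Douglas-type factorization step, namely justifying carefully that domination $0\le R\le C$ transports through $J_C,J_C^*$ to give $R=J_CDJ_C^*$ with a genuine positive contraction $D$ on $\hilc$; once this is in place, the remaining arguments are elementary order computations and the already-established minimality of $A_N$, and the independence of $\AmaxB$ from the chosen factorization is automatic since it is characterized as a maximum.
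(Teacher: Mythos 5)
Your proof is correct, but it takes a genuinely different route from the paper's. The paper's argument is a two-line duality trick: since $B-A_N\geq 0$ is a positive extension of $B-A$, the operator $B-A$ is itself subpositive, and one sets $\AmaxB:=B-(B-A)_N$; maximality of $\AmaxB$ is then literally the minimality of the Krein--von Neumann extension of $B-A$ (if $0\leq\widetilde A\leq B$ extends $A$, then $B-\widetilde A$ is a positive extension of $B-A$, so $(B-A)_N\leq B-\widetilde A$), and no machinery beyond Theorem \ref{T:Krein-von Neumann} is needed. You instead construct the maximum explicitly: after the (correct) reduction to finding the largest $R$ with $0\leq R\leq C:=B-A_N$ and $R|_{\dom A}=0$, you prove a Douglas-type factorization lemma for anti-dual pairs ($0\leq R\leq C=J_CJ_C^*$ if and only if $R=J_CDJ_C^*$ for a positive contraction $D$ on $\hilc$) and identify the answer as the shorted-type operator $J_C(I-P)J_C^*$, with $P$ the projection onto $N=\overline{\set{J_C^*x}{x\in\dom A}}$. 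Your sketch of that lemma is sound (the contraction $W$ with $J_R^*=WJ_C^*$ exists exactly as you describe, and $D:=W^*W$ works, using that adjoints reverse composition in this setting and that $J_R=(J_R^*)^*$), and the remaining steps --- that $I-P$ is the largest positive contraction annihilating $N$, that $D\mapsto J_CDJ_C^*$ is order preserving, and the Cauchy--Schwarz argument for the interval description \eqref{E:[AN,AB]}, which coincides with the paper's own --- are all valid. What each approach buys: the paper's is shorter and self-contained; yours costs an extra lemma but yields the explicit formula $\AmaxB=A_N+J_C(I-P)J_C^*$ (a Krein shorted-operator description of the maximal extension) together with a Douglas-type factorization of independent interest. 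One wording slip worth fixing: in identifying the feasible set you justify $R=\widetilde A-A_N\geq0$ by ``positivity of $\widetilde A$ together with $A_N\geq 0$'', which is not a valid inference; the correct reason is the minimality $A_N\leq\widetilde A$, which you had already invoked in the preceding sentence, so the argument itself is unharmed.
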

\begin{proof}
By assumption, $B-A_N\geq0$ is a positive extension of $B-A$, hence $(B-A)_N$ exists. Set 
\begin{equation*}
\AmaxB:=B-(B-A)_N.
\end{equation*}
We have apparently $0\leq \AmaxB\leq B$, and $A\subset \AmaxB$. If $0\leq \widetilde A \leq B$ extends $A$ then $B-A\subset B-\widetilde A$ implies $(B-A)_N\leq B-\widetilde A$ and hence  $ \widetilde A=B-(B-\widetilde A)\leq \AmaxB$. In order to prove \eqref{E:[AN,AB]} let $A_N\leq \widetilde A\leq \AmaxB$. Take $x\in\dom A$, then for every $y\in E$ we  have 
\begin{align*}
\abs{\dual{\widetilde Ax-Ax}{y}}^2&=\abs{\dual{(\widetilde A-A_N)x}{y}}^2\\
&\leq \dual{(\widetilde A-A_N)x}{x} \dual{(\widetilde A-A_N)y}{y} \\
&\leq \dual{(\AmaxB-A_N)x}{x} \dual{(\widetilde A-A_N)y}{y}=0,
\end{align*}
because $A_Nx=\AmaxB x=Ax$. Hence $\widetilde Ax=Ax$, that is, $A\subset \widetilde A$. 
\end{proof}

The next theorem  tells us that the Krein--von Neumann extension preserves certain commutation properties as well. We remark in advance that the proof is a bit unusual: although the claim concerns continuous operators, the proof itself  applies unbounded operator techniques.

\begin{theorem}\label{T:AN-commute}
Let $A:\dom A\to F$ be a subpositive operator on the $w^*$-sequentially complete anti-dual pair $\dual{F}{E}$.  Suppose that there are two weakly continuous operators $B,C\in \le$ leaving $\dom A$ invariant, and  that the spectrum of $BC$ restricted to $\dom A$  is bounded. Assume in addition that $B$ and $C$ satisfy
\begin{equation}\label{E:AB=CA}
C^*\!A\subset AB,\qquad\mbox{and} \qquad B^*\!A\subset AC, 
\end{equation}
then the Krein-von Neumann extension of $A$ satisfies
\begin{equation*}
C^*\!A_N= A_NB,\qquad\mbox{and} \qquad B^*\!A_N=A_NC.
\end{equation*}
\end{theorem}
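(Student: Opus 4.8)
The plan is to push the whole problem into the auxiliary Hilbert space $\hila$ furnished by the Krein--von Neumann construction, where $A_N=JJ^*$ with $J\in\lhaf$, $J^*\in\leha$, $J^*x=Ax$ and $J(Ax)=Ax$ for $x\in\dom A$, and where $\ran A$ is dense in $\hila$. First I would transport $B$ and $C$ to $\hila$ by setting
\[
\widehat B(Ax):=A(Bx),\qquad \widehat C(Ax):=A(Cx),\qquad x\in\dom A,
\]
which makes sense since $B,C$ leave $\dom A$ invariant. These are well defined on $\ran A$: if $Ax=Ax'$ then, by \eqref{E:AB=CA}, $A(Bx)=C^*(Ax)=C^*(Ax')=A(Bx')$, and similarly for $\widehat C$. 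Using \eqref{E:AB=CA}, the defining identity $\sipa{Au}{Av}=\dual{Au}{v}$ and the transpose relation $\dual{C^*f}{y}=\dual{f}{Cy}$ one computes
\[
\sipa{\widehat B(Ax)}{Ay}=\dual{A(Bx)}{y}=\dual{C^*(Ax)}{y}=\dual{Ax}{Cy}=\sipa{Ax}{\widehat C(Ay)},
\]
so that $\widehat B$ and $\widehat C$ are \emph{mutually adjoint} on the dense domain $\ran A$; in particular both are closable in $\hila$.

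The decisive step is to prove that $\widehat B$ and $\widehat C$ are in fact \emph{bounded}. Composing the definitions gives $\widehat B\widehat C(Ax)=A(BCx)$ and $\widehat C\widehat B(Ax)=A(CBx)$, i.e. $\widehat B\widehat C$ and $\widehat C\widehat B$ are the operators on $\ran A$ induced by $BC$ and $CB$ acting on $\dom A$. Mutual adjointness yields at once that $\widehat B\widehat C$ is symmetric and non-negative, since
\[
\sipa{\widehat B\widehat C u}{v}=\sipa{\widehat C u}{\widehat C v}=\sipa{u}{\widehat B\widehat C v},\qquad \sipa{\widehat B\widehat C u}{u}=\|\widehat C u\|^2\geq0,
\]
and the same holds for $\widehat C\widehat B$ with $\|\widehat B u\|^2$. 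Now the hypothesis that the spectrum of $BC$ restricted to $\dom A$ is bounded transfers — through the identification $Ax\leftrightarrow x$ modulo $\ker A$, under which $\ker A$ is $BC$-invariant — to the statement that $\overline{\widehat B\widehat C}$ has bounded spectrum. Since the spectrum of a closed symmetric operator is either contained in $\dupR$ (exactly when the operator is self-adjoint) or contains a closed half-plane, boundedness of the spectrum forces $\overline{\widehat B\widehat C}$ to be self-adjoint, and a self-adjoint operator with bounded spectrum is bounded. Consequently $\|\widehat C u\|^2=\sipa{\widehat B\widehat C u}{u}\leq\|\overline{\widehat B\widehat C}\|\,\|u\|^2$ shows that $\widehat C$ is bounded; the symmetric argument applied to $CB$ shows $\widehat B$ is bounded. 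By continuity the mutual adjoint relation extends from $\ran A$ to all of $\hila$, so $\widehat C$ is the adjoint of $\widehat B$ in the Hilbert space $\hila$.

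Finally I would transfer these identities back to $E$ and $F$. On the dense set $\ran A$ one has, by \eqref{E:AB=CA},
\[
C^*J(Ax)=C^*(Ax)=A(Bx)=J\widehat B(Ax),\qquad B^*J(Ax)=B^*(Ax)=A(Cx)=J\widehat C(Ax),
\]
so by continuity $C^*J=J\widehat B$ and $B^*J=J\widehat C$ hold on all of $\hila$. The point of the construction is that these two identities live on $\hila$, where $\ran A$ \emph{is} dense, whereas the target identities must hold on all of $E$, in which $\dom A$ need \emph{not} be dense. To bridge this gap I would pass to adjoints: using $(C^*)^*=C$, $(B^*)^*=B$, the fact that $\widehat C$ is the $\hila$-adjoint of $\widehat B$, and the reversal of composition under $*$, the identities above yield
\[
J^*B=\widehat B J^*,\qquad J^*C=\widehat C J^*\qquad\text{on all of }E.
\]
Combining these with the previous pair gives $C^*A_N=C^*JJ^*=(J\widehat B)J^*=J(J^*B)=A_NB$, and likewise $B^*A_N=J\widehat C J^*=A_NC$, which is the assertion.

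The main obstacle is the boundedness step of the second paragraph: the remaining parts are bookkeeping with adjoints, but converting the purely spectral hypothesis on $BC|_{\dom A}$ into genuine operator-norm boundedness of $\widehat B$ and $\widehat C$ is where the unbounded-operator input is essential. The delicate points there are (i) making precise the identification that transfers $\sigma(BC|_{\dom A})$ to $\sigma(\overline{\widehat B\widehat C})$, in particular that passing to the quotient by $\ker A$ and then to the $\hila$-closure does not enlarge the spectrum, and (ii) invoking the classification of the spectrum of closed symmetric operators to upgrade \emph{bounded spectrum} together with \emph{symmetry} to \emph{self-adjointness and boundedness}.
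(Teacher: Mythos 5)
Your overall skeleton --- transporting $B$ and $C$ into the auxiliary Hilbert space $\hila$ via $\widehat B(Ax)=ABx$, $\widehat C(Ax)=ACx$, establishing a formal adjointness relation between them on $\ran A$, and carrying the identities back through $J$ and $J^*$ --- is exactly the paper's, and your opening and closing computations agree with it. The genuine gap is the step you yourself flag as delicate: the claim that boundedness of the spectrum of $BC|_{\dom A}$ transfers to boundedness of the Hilbert-space spectrum of $\overline{\widehat B\widehat C}$. In this paper the spectral hypothesis means that $\lambda I-BC$ restricted to $\dom A$ has a \emph{weakly continuous} inverse $\dom A\to\dom A$ for all large $\abs{\lambda}$. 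Such invertibility only tells you that the induced operator $\lambda I_A-\widehat B\widehat C$ maps $\ran A$ onto $\ran A$, i.e.\ that $\lambda I_A-\overline{\widehat B\widehat C}$ has range containing a dense subspace. To conclude $\lambda\in\rho\bigl(\overline{\widehat B\widehat C}\bigr)$ you would also need the induced inverse to be bounded in the $\hila$-norm, and nothing in the hypothesis provides this: the inverse is continuous for $\sigma(E,F)$ on $\dom A$, a topology with no relation to the norm of $\hila$. Non-negativity of $\widehat B\widehat C$ does give $\|(\widehat B\widehat C+t^2I_A)u\|\geq t^2\|u\|$, so your transfer is valid at the points $\lambda=-t^2<0$ (and in fact shows that $\overline{\widehat B\widehat C}$ is self-adjoint, since a closed symmetric operator with a real resolvent point is self-adjoint); but for $\lambda>0$ --- exactly the points needed to bound the spectrum from above --- there is no lower bound, and $\lambda$ may lie in the continuous spectrum of $\overline{\widehat B\widehat C}$ even though $\lambda I-BC$ is bijective on $\dom A$. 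Self-adjointness alone does not make $\overline{\widehat B\widehat C}$ bounded, so the inequality $\|\widehat Cu\|^2\leq\|\overline{\widehat B\widehat C}\|\,\|u\|^2$ has no justified right-hand side, and the boundedness of $\widehat B,\widehat C$, on which all of your concluding adjoint bookkeeping rests, is not established.

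This is not a repairable slip within your strategy: the paper explicitly remarks, immediately after its proof, that it does \emph{not} know whether $\widehat B$ and $\widehat C$ are bounded in this generality (boundedness is claimed only when $E$ is a Banach space, by a different argument), so you are attempting to prove something strictly stronger than what the authors could. What the paper proves instead is the weaker but sufficient statement that the closures are adjoints of each other, $\widehat C^{\,*}=\widehat B^{\,**}$ and $\widehat B^{\,*}=\widehat C^{\,**}$, and it extracts this from precisely the information your argument legitimately produces. It forms the operator matrix $\Pi$ with diagonal entries $tI_A$ and off-diagonal entries $-\widehat B^{**}$, $\widehat C^{**}$: the antisymmetric structure makes the cross terms cancel, so $\|\Pi(h,k)\|^2\geq t^2\|(h,k)\|^2$ holds for every $t\neq0$ with no sign problems, while the spectral hypothesis (together with the Jacobson lemma for $CB$) is used only to show that $\ran\Pi_0$ contains the dense set $\ran A\times\ran A$ --- i.e.\ only the surjectivity you also derive. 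Closedness plus the lower bound upgrade dense range to full range, and \cite{SebTarMultilin}*{Theorem 3.1} converts full range of $\Pi$ into the mutual adjointness. With $\widehat C^{\,*}=\widehat B^{\,**}$ in hand one gets $J^*B=\widehat B^{**}J^*$ and $J^*C=\widehat C^{**}J^*$ directly, and concludes with the same algebra as your last paragraph, carried out with unbounded operators. Replacing your boundedness claim by this matrix argument turns your proposal into the paper's proof.
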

\begin{proof}
Let us introduce two linear operators $\widehat{B}$ and $\widehat{C}$  on the dense linear subspace $\ran A\subseteq \hila$ by letting 
\begin{align*}
\widehat{B}(Ax):=ABx,\qquad \widehat{C}(Ax):=ACx,\qquad x\in \dom A.
\end{align*}
Observe that $\widehat{B}$ and $\widehat{C}$ satisfy
\begin{align*}
\sipa{\widehat{B}(Ax)}{Ay}=\sipa{Ax}{\widehat{C}(Ay)}, \qquad x,y\in \dom A
\end{align*} 
because from \eqref{E:AB=CA} it follows that
\begin{align*}
\sipa{ABx}{Ay}=\dual{ABx}{y}=\dual{Ax}{Cy}=\sipa{Ax}{ACy}.
\end{align*}
 We see therefore that $\widehat{B}$ and $\widehat{C}$ are closable operators and their closures satisfy 
\begin{align}\label{E:BwedgeC}
\sipa{\widehat{B}^{**}h}{k}=\sipa{h}{\widehat{C}^{**}k}, \qquad h\in\dom(\widehat B^{**}),k\in\dom (\widehat{C}^{**}).
\end{align}
We are going to prove first that $\widehat B^{**}$ and $\widehat C^{**}$ are adjoints of each other, i.e.,
\begin{align*}
\widehat C^{*}=\widehat B^{**},\qquad \widehat B^{*}=\widehat C^{**}.
\end{align*}
By  \cite{SebTarMultilin}*{Theorem 3.1} that   will be shown once we prove that  the operator matrix 
\begin{align*}
\Pi:=\begin{pmatrix}
t I_A&   -\widehat B^{**}\\
\widehat C^{**}& t I_A
\end{pmatrix}
\end{align*}
has full range, i.e., $\ran \Pi=\hila\times\hila$, for some $t\in\dupR$, $t\neq0$ (here $I_A$ stands for the identity operator of $\hila$). From \eqref{E:BwedgeC} it is easy to deduce that 
\begin{align*}
\|\Pi(h,k)\|^2\geq t^2\|(h,k)\|^2,\qquad (h,k)\in\dom \Pi,
\end{align*}
and since $\Pi$ is a closed operator, its  range is closed. On the other hand, with 
\begin{align*}
\Pi_0:=\begin{pmatrix}
t I_A&   -\widehat B\\
\widehat C& t I_A
\end{pmatrix},
\end{align*}
we have  $\Pi=\Pi_0^{**}$, hence it suffices to show that $\Pi_0$ has dense range. To do so observe that 
\begin{align*}
\begin{pmatrix}
t I_A&   -\widehat B\\
\widehat C& t I_A
\end{pmatrix}
\begin{pmatrix}
t I_A&   \widehat B\\
-\widehat C& t I_A
\end{pmatrix}=\begin{pmatrix}
t^2 I_A+\widehat{B}\widehat C&   0\\
0& t^2 I_A+\widehat{C}\widehat{B}
\end{pmatrix},
\end{align*}
and here the latter one has range 
\begin{align}\label{E:ranXXX}
\set{(A(t^2I+BC)x,A(t^2I+CB)y)}{x,y\in \dom A}\subset \hila\times\hila.
\end{align}
By assumption, we have that the restriction $t^2I+BC|_{\dom A}:\dom A\to\dom A$ is weakly continuously invertible whenever $t$ is large enough. The same holds true for $t^2I+CB$ because of the Jacobson lemma. Hence we conclude that the set in \eqref{E:ranXXX} is identical with $\ran A\times \ran A$, and therefore  $\ran A\times\ran A\subseteq \ran\Pi_0$, which proves our claim.

Next we show that $\ran J^*\subseteq \dom(\widehat B^{**})$ and $J^*B=\widehat{B}^{**}J^*$. Let $y\in E$ and $x\in \dom A$, then $\sipa{\widehat{C}(Ax)}{J^*y}=\dual{ACx}{y}=\dual{Ax}{By}=\sipa{Ax}{J^*By}$, whence we have  $J^*y\in \dom (\widehat{C }^{*})$ and $\widehat{C}^*J^*y=J^*By$. Since we have $\widehat{C}^*=\widehat{B}^{**}$, this proves $J^*B=\widehat{B}^{**}J^*$. Similarly, $J^*C=\widehat{C}J^*$. And now we have arrived: 
\begin{align*}
A_NB=JJ^*B=J\widehat{B}^{**}J^*=(J\widehat{B}^*J^*)^*=(J\widehat{C}^{**}J^*)^*=(A_NC)^*=C^*A_N,
\end{align*}
and similarly, $A_NC=B^*A_N$. The proof is complete. 
\end{proof}
We have proved above that the auxiliary operators $\widehat{B}$ and $\widehat{C}$ (more precisely, their closures) are adjoint of each other, but we do not know whether they are bounded or not. If the underlying anti-dual pair comes from a Banach space $E$ and its conjugate dual space $F=\anti{E}$ then one can imitate the proof of \cite{HSdeS}*{Theorem 2.2} to show that $\widehat{B}$, $\widehat{C}$ are bounded operators with common norm bound $\sqrt{r(BC)}$, the spectral radius of $BC$. \\

We close this section by a quick application of our main theorem. Namely, we are going to deal with so called positive definite operator functions see e.g. \cites{as,s1} for the Hilbert space case, and \cites{M,HSdS} for the Banach space setting. As these frameworks were too restrictive for various applications, many authors dealt with the case of functions taking
values in a locally convex topological vector spaces in duality (see for example \cites{AG} and the references therein). Our setting is as follows: let $\dual FE$ be a $w^*$-sequentially complete anti-dual pair and $Z$ be a non-void set. Let us denote by $\mathscr{F}$  the vector space of functions $u:Z\to E$ which have finite support, and denote by $\bar{\mathscr{F}}^*$ the algebraic conjugate dual of $\mathscr{F}$. A function $K:Z\times Z\to \lef$ is called a positive definite function (or shortly, a \emph{kernel}) if it satisfies 
\begin{equation}\label{E:kernel}
\sum_{s,t\in Z} \dual{K(s,t)u(s)}{u(t)}\geq0,\qquad \mbox{for all $u\in\mathscr{F}$}.
\end{equation} 
We can naturally associate a positive operator $A_K:\mathscr{F}\to\bar{\mathscr{F}}^*$ with $K$ by setting 
\begin{align*}
\dual{A_Ku}{v}:=\sum_{s,t\in Z} \dual{K(s,t)u(s)}{u(t)}, \qquad u,v\in \mathscr{F}.
\end{align*}
For given two kernels $K,L$ we write $K\preceq L$ if their associated positive operators satisfy $A_K\leq A_L$. Using this correspondence one can consider a kernel $K$ as subspace of $\mathscr{F}\times\bar{\mathscr{F}}^*$ of the form $\{(u,A_Ku)\,|\,u\in \mathscr{F}\}$, say the graph of $K$. In the following theorem we will characterize whether a subspace implemented by an operator $A:\mathscr{F}_0\to\bar{\mathscr{F}}^*$ can be extended to be a graph of a kernel.

\begin{theorem}
Let $\mathscr{F}_0$ be a linear subspace of $\mathscr{F}$, and assume that a positive operator $A:\mathscr{F}_0\to\bar{\mathscr{F}}^*$ is given. Then the following assertions are equivalent:
\begin{enumerate}[\upshape (i)]
 \item There exists a kernel $K:Z\times Z\to \lef$ so that $A\subseteq A_K$,
 \item For every $v\in\mathscr{F}$ there is $M_v\geq0$ such that 
 \begin{align*}
 \abs{\dual{Au}{v}}^2\leq M_v\cdot\dual{Au}{u}\qquad\mbox{for all $u\in\mathscr{F}_0$}.
 \end{align*}
 Moreover, there is a $K$ that is smallest in the sense that $K\preceq L$ for any kernel $L$ with $A\subseteq A_L$.
\end{enumerate}
\end{theorem}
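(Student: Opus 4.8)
The plan is to deduce this statement from Theorem \ref{T:Krein-von Neumann} applied to the anti-dual pair $\dual{\bar{\mathscr{F}}^*}{\mathscr{F}}$. First I would record that this pair is automatically $w^*$-sequentially complete, since $\bar{\mathscr{F}}^*$ is the \emph{full} algebraic conjugate dual of $\mathscr{F}$. Writing $\delta_s^x\in\mathscr{F}$ for the finitely supported function taking the value $x\in E$ at $s\in Z$ and $0$ elsewhere, and noting that every $u\in\mathscr{F}$ is a finite sum $u=\sum_s\delta_s^{u(s)}$, condition (ii) is \emph{verbatim} condition (iv) of Theorem \ref{T:Krein-von Neumann} for the operator $A:\mathscr{F}\supseteq\mathscr{F}_0\to\bar{\mathscr{F}}^*$, with $\mathscr{F}$ in the role of $E$ and $v$ in the role of $y$. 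Hence (ii) is equivalent to the existence of an everywhere defined positive extension $A_N\in\mathscr{L}(\mathscr{F};\bar{\mathscr{F}}^*)$, the Krein--von Neumann extension of $A$.

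Next I would set up the dictionary between kernels and everywhere defined positive operators. For a kernel $K$ the defining identity gives $\dual{A_K\delta_s^x}{\delta_t^y}=\dual{K(s,t)x}{y}$, so $K$ is recovered from $A_K$ by restriction to the ``elementary'' functions; conversely, a positive $\widetilde{A}\in\mathscr{L}(\mathscr{F};\bar{\mathscr{F}}^*)$ whose entries
\[\dual{K(s,t)x}{y}:=\dual{\widetilde{A}\delta_s^x}{\delta_t^y}\]
define operators $K(s,t)\in\lef$ produces a kernel with $A_K=\widetilde{A}$, by bilinearity and $\widetilde{A}=\widetilde{A}^*$. Under this correspondence positivity of $\widetilde{A}$ is exactly positive definiteness of $K$, both equal to $\sum_{s,t}\dual{K(s,t)u(s)}{u(t)}=\dual{\widetilde{A}u}{u}\geq0$, and the order is preserved by definition, $K\preceq L\Leftrightarrow A_K\leq A_L$. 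In particular (i)$\Rightarrow$(ii) is immediate: $A_K$ is then an everywhere defined positive extension of $A$, so Theorem \ref{T:Krein-von Neumann} (i)$\Rightarrow$(iv) yields (ii).

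The real work, and the main obstacle, is the italicized clause above: one must check that the entries $K_N(s,t)$ read off from $A_N$ genuinely land in $F$, i.e.\ $K_N(s,t)\in\lef$ rather than merely in $\mathscr{L}(E;\bar{E}^*)$. Here I would exploit the factorization $A_N=JJ^*$ with $J\in\mathscr{L}(\hila;\bar{\mathscr{F}}^*)$ and $J^*\in\mathscr{L}(\mathscr{F};\hila)$. Introducing the embeddings $\iota_t:E\to\mathscr{F}$, $\iota_t x:=\delta_t^x$, and the composites $\Phi_t:=J^*\iota_t:E\to\hila$, the identity \eqref{E:J*} gives
\[\dual{K_N(s,t)x}{y}=\dual{JJ^*\delta_s^x}{\delta_t^y}=\sipa{\Phi_s x}{\Phi_t y},\]
so it suffices to show $\Phi_t\in\mathscr{L}(E;\hila)$; then its adjoint $\Phi_t^*\in\mathscr{L}(\hila;F)$ exists and $K_N(s,t)=\Phi_t^*\Phi_s\in\lef$ as in Example \ref{Ex:prototype}. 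To prove weak continuity of $\Phi_t$ I would test against the dense set $\ran A\subseteq\hila$: for $w\in\mathscr{F}_0$ one has $\sipa{\Phi_t y}{Aw}=\overline{\dual{Aw}{\delta_t^y}}$, and the slot functionals $y\mapsto\dual{Aw}{\delta_t^y}$ represent elements of $F$ — this is the natural compatibility of $A$ with the kernel structure, which is forced as soon as $A\subseteq A_K$ is possible. For a general $k=\lim_n Aw_n\in\hila$ the associated representing elements form a $w^*$-Cauchy \emph{sequence} in $F$, and its limit again lies in $F$ \emph{precisely because} $\dual{F}{E}$ is $w^*$-sequentially complete; this delivers $\sigma(E,F)$-continuity of $y\mapsto\sipa{\Phi_t y}{k}$ for every $k$, hence $\Phi_t\in\mathscr{L}(E;\hila)$. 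The crux is thus the interchange of the $\hila$-limit with the slotwise $w^*$-limit, which is exactly what makes the $w^*$-sequential completeness hypothesis on $\dual{F}{E}$ indispensable.

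Finally, minimality transfers directly. Since $A_{K_N}=A_N$ is the minimal positive extension of $A$, any kernel $L$ with $A\subseteq A_L$ has $A_L$ a positive extension of $A$, whence $A_{K_N}=A_N\leq A_L$; by the order dictionary this reads $K_N\preceq L$, exhibiting $K_N$ as the smallest kernel and completing the proof.
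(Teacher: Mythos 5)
Your overall route is the same as the paper's: apply Theorem \ref{T:Krein-von Neumann} to the $w^*$-sequentially complete pair $\dual{\bar{\mathscr{F}}^*}{\mathscr{F}}$ to obtain $A_N=JJ^*$, read off the candidate kernel from $A_N$ on the elementary functions $\delta_s^x$ (the paper's $u_{s,x}$), and transfer minimality through the order dictionary $K\preceq L\Leftrightarrow A_K\leq A_L$. You have, moreover, correctly isolated the one non-formal point, which the paper dismisses with the words ``whence it is clear that $K(s,t)\in\lef$'': one must show that the slot operators take values in $F$ and are weakly continuous into $(F,\sigma(F,E))$, not merely that they map into $\bar{E}^*$. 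But your way of discharging exactly this point is circular. To prove $\Phi_t=J^*\iota_t\in\mathscr{L}(E;\hila)$ you test against the dense set $\ran A$ and assert that the functionals $y\mapsto\dual{Aw}{\delta_t^y}$ ($w\in\mathscr{F}_0$) ``represent elements of $F$'', justified because this ``is forced as soon as $A\subseteq A_K$ is possible'' --- that is, you invoke (i), the statement being proved, inside the proof of (ii)$\Rightarrow$(i). Hypothesis (ii) gives no control on these slot functionals: it constrains $\dual{Au}{v}$ only through the numbers $M_v$, and says nothing about $\sigma(E,F)$-continuity in the slot variable. (The subsequent limit step, passing from $\ran A$ to all of $\hila$ via $w^*$-sequential completeness of $\dual{F}{E}$, is fine --- but only once the base case on $\ran A$ is available.)

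The gap is genuine and cannot be filled, because (ii)$\Rightarrow$(i) is false as the theorem is stated. Take $Z=\{t_0\}$ a singleton, $E$ an infinite-dimensional Banach space and $F=\anti{E}$, so that $\dual{F}{E}$ is $w^*$-sequentially complete and $\mathscr{F}\cong E$, $\bar{\mathscr{F}}^*\cong\bar{E}^*$. Pick $e\in E$ and an unbounded conjugate-linear functional $g\in\bar{E}^*\setminus\anti{E}$ with $g(e)=1$, and set $\mathscr{F}_0:=\dupC e$, $A(\lambda e):=\lambda g$. Then $A$ is positive, since $\dual{A(\lambda e)}{\lambda e}=\abs{\lambda}^2$, and (ii) holds with $M_v=\abs{g(v)}^2$; yet for any kernel $K:Z\times Z\to\lef$ one has $A_Ku=K(t_0,t_0)u\in F$ for all $u$, so $A\subseteq A_K$ would force $g=Ae\in\anti{E}$, a contradiction. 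In this example your $\Phi_{t_0}$ is the rank-one map $y\mapsto\overline{g(y)}\,(Ae)$, which is not $\sigma(E,F)$-$\sigma(\hila,\hila)$ continuous precisely because $g\notin F$ --- the very step you needed. (The same example shows the paper's own symmetry argument only yields $K(s,t)\in\mathscr{L}(E;\bar{E}^*)$, not $K(s,t)\in\lef$.) The statement becomes correct, and your $w^*$-sequential-completeness argument then genuinely completes the proof, if one either adds to (ii) the necessary hypothesis that for every $w\in\mathscr{F}_0$ and $t\in Z$ the functional $y\mapsto\dual{Aw}{\delta_t^y}$ is represented by an element of $F$, or else weakens (i) by allowing kernels with values in $\mathscr{L}(E;\bar{E}^*)$. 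So: you found the right obstacle --- one the paper itself glosses over --- but the way you pass it assumes the conclusion, and no argument from (ii) alone can succeed.
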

\begin{proof}
It is obvious that (i) implies (ii). For the converse observe that $\mathscr{F}$ and $\bar{\mathscr{F}}^*$ intertwined by the evaluation form a $w^*$-sequentially complete anti-dual pair, thus we can apply Theorem \ref{T:Krein-von Neumann}. Therefore the smallest (Krein-von Neumann) extension $A_N$  of $A$ exists. Our only duty is  to show that $A_N$ is induced by a suitable kernel $K$. To this aim, for any fixed $s\in Z$ and $x\in E$ introduce the function $u_{s,x}\in\mathscr F$ by setting 
\begin{align*}
u_{s,x}(s'):=\delta_s(s')\cdot x,\qquad s'\in Z,
\end{align*}
where $\delta_s$ is the usual Dirac-function concentrated at $s$. For $s,t\in Z$ define a linear operator $K(s,t):E\to F$ by 
\begin{align*}
\dual{K(s,t)x}{y}:=\dual{A_Nu_{s,x}}{u_{t,y}},\qquad x,y\in E.
\end{align*}
By the symmetry of $A_N$ we conclude that 
\begin{align*}
\dual{K(s,t)x}{y}=\dual{A_Nu_{s,x}}{u_{t,y}}=\overline{\dual{A_Nu_{t,y}}{u_{s,x}}}=\overline{\dual{K(t,s)y}{x}},
\end{align*}
whence it is clear that $K(s,t)\in\lef$ (and also that $K(s,t)^*=K(t,s)$). Finally, since every $u\in\mathscr F$ can be expressed as $u=\sum\limits_{s\in Z}u_{s,u(s)}$ we have 
\begin{align*}
\dual{A_Nu}{v}=\sum_{s,t\in Z}\dual{Au_{s,u(s)}}{u_{t,v(t)}}=\sum_{s,t\in Z}\dual{K(s,t)u(s)}{v(t)},
\end{align*}
which means that $A_N$ is induced by the kernel $K$.
\end{proof}

\section{The Banach space setting}\label{S: Banach}

In this section we are going to investigate the special case when the anti-duality is the evaluation on the pair of a fixed Banach space $E$ and its conjugate topological dual $\anti{E}$.  We will obtain a strengthening of the main result of \cite{SSZT}, and we will show that Halmos' original result is indeed a corollary of our main theorem.

We remark that the Banach-Steinhaus theorem forces $\anti{E}$ to be weakly sequentially complete, and hence everything has been proved in the preceding sections remains valid also for the anti-dual pair $\dual{\anti{E}}{E}$.

We start our investigations with a suitable generalization of the Schwarz inequality (cf. Kadison \cite{Kadison}*{Theorem 1}, and also  Lemma 2.1 and Corollary 2.2 in \cite{TarcsayBanach}).
\begin{proposition} Let $E$ be a Banach space and let $A_i:E\to\anti E$ $(1\leq i \leq n)$ be positive operators. Then for every set of vectors $x_i\in E$ ${1\leq i\leq n}$ the following inequality holds

\begin{align}\label{E:genSchwarz}
\Big\|\sum_{j=1}^n A_j x_j\Big\|^2\leq\Big\|\sum_{j=1}^n A_j \Big\|\cdot \sum_{j=1}^n \dual{A_j x_j}{x_j}.
\end{align}
Moreover, $C:= \|\sum_{j=1}^n A_j\|$ is the smallest constant satisfying \eqref{E:genSchwarz}.
\end{proposition}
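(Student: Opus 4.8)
The plan is to obtain \eqref{E:genSchwarz} by chaining together two Cauchy--Schwarz inequalities, one scalar and one discrete. Write $S:=\sum_{j=1}^n A_j\in\lee$, which is again positive, and recall that on $\anti E$ the relevant norm is the dual norm $\|f\|=\sup_{\|y\|\le 1}\abs{\dual{f}{y}}$, so that $\|Sy\|\le\|S\|\,\|y\|$ and, by linearity of the anti-duality in its first slot, $\bigl\|\sum_j A_jx_j\bigr\|=\sup_{\|y\|\le 1}\abs[\big]{\sum_j\dual{A_jx_j}{y}}$.

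First I would fix $y\in E$ with $\|y\|\le 1$. Since each $A_j$ is positive it is symmetric by Proposition \ref{P:Hellinger}, so $(x,z)\mapsto\dual{A_jx}{z}$ is a positive semidefinite hermitian form and the scalar Cauchy--Schwarz inequality gives $\abs{\dual{A_jx_j}{y}}^2\le\dual{A_jx_j}{x_j}\dual{A_jy}{y}$. Summing over $j$ and then applying the Cauchy--Schwarz inequality for finite sums leads to
\begin{align*}
\abs[\Big]{\sum_{j=1}^n\dual{A_jx_j}{y}}
&\le\sum_{j=1}^n\dual{A_jx_j}{x_j}^{1/2}\,\dual{A_jy}{y}^{1/2}\\
&\le\Big(\sum_{j=1}^n\dual{A_jx_j}{x_j}\Big)^{\!1/2}\Big(\sum_{j=1}^n\dual{A_jy}{y}\Big)^{\!1/2}.
\end{align*}
The last factor equals $\dual{Sy}{y}^{1/2}\le\bigl(\|S\|\,\|y\|^2\bigr)^{1/2}\le\|S\|^{1/2}$, using positivity of $S$ together with $\dual{Sy}{y}\le\|Sy\|\,\|y\|$. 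Taking the supremum over the unit ball of $E$ and squaring yields precisely \eqref{E:genSchwarz}.

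For the minimality of the constant, suppose \eqref{E:genSchwarz} holds with some $C$ in place of $\|S\|$. Specializing to the diagonal $x_1=\dots=x_n=x$ collapses the left-hand side to $\|Sx\|^2$ and the right-hand side to $C\dual{Sx}{x}$; combining this with $\dual{Sx}{x}\le\|Sx\|\,\|x\|$ gives $\|Sx\|\le C\|x\|$ for every $x\in E$, whence $\|S\|\le C$. As we have already verified that $C=\|S\|$ is admissible, it is the smallest admissible constant.

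The computation is essentially routine; the only steps needing attention are the appeal to symmetry of the $A_j$, which is what legitimizes the scalar Cauchy--Schwarz estimate and is guaranteed by positivity through Proposition \ref{P:Hellinger}, and the observation that testing on the diagonal is exactly what pins the optimal constant down to $\|S\|$ rather than to some smaller value.
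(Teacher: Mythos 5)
Your proof is correct, but it takes a genuinely different route from the paper's. The paper proves \eqref{E:genSchwarz} by a factorization argument: it forms the direct sum Hilbert space $\kil:=\bigoplus_{j=1}^n \hil_{A_j}$ of the auxiliary spaces attached to the $A_j$ (the same spaces that underlie the Krein--von Neumann construction), defines $V:E\to\kil$ by $Vx:=(A_1x,\dots,A_nx)$, checks $\|V\|^2\le\|\sum_{j}A_j\|$, computes $V^*(A_1x_1,\dots,A_nx_n)=\sum_j A_jx_j$, and reads off the inequality from $\|V^*\|=\|V\|$. You instead chain two Cauchy--Schwarz inequalities --- the scalar one for each positive hermitian form $(x,z)\mapsto\dual{A_jx}{z}$, legitimized as you note by symmetry via Proposition \ref{P:Hellinger}, and the discrete one for finite sums --- and finish with duality of the norm on $\anti{E}$. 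Both arguments are complete; yours is more elementary, avoiding the construction of auxiliary Hilbert spaces and any adjoint computation, while the paper's approach exhibits $\sum_j A_j$ as $V^*V$ and thus stays within the factorization philosophy that drives the whole paper (compare $A_N=JJ^*$), which is arguably why the constant $\|\sum_j A_j\|$ appears so transparently there. Your minimality arguments essentially coincide: both test \eqref{E:genSchwarz} on the diagonal $x_1=\dots=x_n=x$; the paper then estimates $\dual{Sx}{x}\le\|S\|\,\|x\|^2$ and cancels a factor of $\|S\|$ after taking suprema, whereas you estimate $\dual{Sx}{x}\le\|Sx\|\,\|x\|$ and cancel $\|Sx\|$ pointwise --- a negligible difference.
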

\begin{proof}
Consider the direct sum Hilbert-space  $\kil:=\bigoplus_{j=1}^n \hil_{A_j}$, and define a linear operator $V:E\to\kil$ by $Vx:=(A_1x,A_2x,\ldots,A_nx)$ for $x\in E$. It is easy to check that $V$ is continuous by norm bound $\|V\|^2\leq \|\sum\limits_{j=1}^n A_j \|$. A straightforward calculation shows that its adjoint acts as 
$V^*(A_1x_1,\ldots,A_nx_n):=\sum_{j=1}^n A_jx_j$, for all $x_1,x_2,\ldots,x_n\in E$. Since $\|V^*\|=\|V\|$, we infer that 
\begin{align*}
\Big\|\sum_{j=1}^n A_jx_j\Big\|^2&=\|V^*(A_1x_1,A_2x_2,\ldots,A_nx_n)\|^2\\
&\leq \|V^*\|^2\cdot \|(A_1x_1,A_2x_2,\ldots,A_nx_n)\|^2\\
&\leq \Big\|\sum_{j=1}^n A_j\Big\|\cdot\sum_{j=1}^n\dual{A_jx_j}{x_j}.
\end{align*} In order to prove minimality, let us fix a constant $C\geq0$ that satisfies \eqref{E:genSchwarz} with $\|\sum_{j=1}^n A_j \|$ replaced by $C$. Then for $x\in E$ the following inequalities hold
\begin{align*}
\Big\|\sum_{j=1}^n A_jx\Big\|^2\leq C\cdot\sum_{j=1}^n\dual{A_jx}{x} \leq C\cdot \Big\|\sum_{j=1}^n A_j\Big\|\cdot \|x\|^2,
\end{align*}
thus we have $\|\sum_{j=1}^n A_j \|\leq C$, indeed.
\end{proof}
As an immediate consequence of the preceding proposition ($n=1$) we have 
\begin{align}\label{C:Schwarz} 
\|Ax\|^2\leq \|A\|\dual{Ax}{x},\qquad x\in E.
\end{align}
Now we rephrase our main theorem in the Banach space setting, extending the results obtained in \cite{SSZT}*{Theorem 3.1} and also \cite{ATV}*{Theorem 2.2}.
\begin{theorem}\label{T:Banach-extension}
Let $E$ be a Banach space, and let $A:E\supseteq \dom A\to \anti{E}$ be a linear operator. Then the following statements are equivalent.

\begin{enumerate}[\upshape (i)]
 \item $A$ has a (continuous) positive extension $\widetilde{A}\in\lee$,
 \item There is a constant $M\geq0$ such that 
 \begin{align}\label{E:AxleqAxx}
  \|Ax\|^2\leq M\cdot\dual{Ax}{x},\qquad x\in\dom A, 
 \end{align}
 \item For any $y\in \hil$ there exists $M_y\geq0$ such that 
 \begin{align*}
    \abs{\dual{Ax}{y}}^{2}\leq M_y\cdot\dual{Ax}{x},\qquad x\in\dom A.
 \end{align*}
\end{enumerate}
In any case, there exists the Krein-von Neumann extension $A_N$ of $A$ that is the smallest among the set of positive extensions of $A$. The norm of $A_N$ satisfies
\begin{equation}\label{E:AN-norm}
\|A_N\|=\inf\set{M\geq0}{\|Ax\|^2\leq M\cdot \dual{Ax}{x},\quad x\in\dom A}.
\end{equation}
 If $B,C\in \le$ are continuous operators leaving $\dom A$ invariant such that $C^*\!A\subset AB$ and $B^*\!A\subset AC$ then 
the Krein-von Neumann extension of $A$ satisfies
\begin{equation}\label{E:AB=CA-Banach}
C^*\!A_N= A_NB,\qquad\mbox{and} \qquad B^*\!A_N=A_NC.
\end{equation}
\end{theorem}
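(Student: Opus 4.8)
The plan is to obtain every assertion by specializing the general machinery of Section~\ref{S: MT} to the anti-dual pair $\dual{\anti E}{E}$, which is $w^*$-sequentially complete by the Banach--Steinhaus theorem; the only genuinely Banach-specific ingredient is the generalized Schwarz inequality \eqref{C:Schwarz}. For the equivalences I would run the cycle (i)$\Rightarrow$(ii)$\Rightarrow$(iii)$\Rightarrow$(i). The implication (iii)$\Rightarrow$(i), together with the existence and minimality of the Krein--von Neumann extension $A_N=JJ^*$, is nothing but Theorem~\ref{T:Krein-von Neumann} applied with $F=\anti E$, since (iii) is exactly its condition~(iv). For (ii)$\Rightarrow$(iii) I would simply bound $\abs{\dual{Ax}{y}}^2\leq\|Ax\|^2\|y\|^2\leq M\|y\|^2\dual{Ax}{x}$, so that $M_y:=M\|y\|^2$ does the job. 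For (i)$\Rightarrow$(ii) I would feed the everywhere-defined positive extension $\widetilde A$ into \eqref{C:Schwarz}: on $\dom A$ we have $\widetilde Ax=Ax$, hence $\|Ax\|^2\leq\|\widetilde A\|\dual{Ax}{x}$, and $M=\|\widetilde A\|$ works.

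The norm formula \eqref{E:AN-norm} I would extract from the factorization $A_N=JJ^*$ furnished by the proof of Theorem~\ref{T:Krein-von Neumann}. Writing $T:=J^*\colon E\to\hila$, one has $A_N=T^*T$ and $\dual{A_Nx}{x}=\|Tx\|^2$; combining the elementary lower estimate $\|T\|^2\leq\|A_N\|$ with the upper estimate obtained by inserting $A_N$ into \eqref{C:Schwarz} yields $\|A_N\|=\|T\|^2=\|J\|^2$. Since $J$ is the continuous extension of $Ax\mapsto Ax$ and $\|Ax\|_{\hila}^2=\dual{Ax}{x}$, the operator norm of $J$ computes to $\|J\|^2=\sup\set{\|Ax\|^2/\dual{Ax}{x}}{x\in\dom A,\ \dual{Ax}{x}>0}$, and this supremum coincides with the infimum in \eqref{E:AN-norm} once one notes that vectors with $\dual{Ax}{x}=0$ satisfy $Ax=0$ and so impose no constraint on the admissible constants $M$.

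Finally, the commutation relations \eqref{E:AB=CA-Banach} follow directly from Theorem~\ref{T:AN-commute}; the single hypothesis to check there, namely that the spectrum of $BC$ restricted to $\dom A$ be bounded, is automatic in the present setting because $B,C\in\le$ are bounded operators on the Banach space $E$, whence $BC$ is bounded and $t^2I+BC$ is boundedly invertible for all sufficiently large $t$. I expect the only point requiring real care to be the norm identity $\|A_N\|=\|J\|^2$: here $T=J^*$ maps a Banach space into a Hilbert space while $T^*=J$ maps back into the conjugate dual $\anti E$, so the familiar Hilbert-space equality $\|T^*T\|=\|T\|^2$ cannot be invoked verbatim and must be re-derived, and it is precisely \eqref{C:Schwarz} that makes this argument go through.
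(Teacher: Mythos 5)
Your treatment of the equivalences (i)--(iii) is exactly the paper's: (i)$\Leftrightarrow$(iii) is Theorem~\ref{T:Krein-von Neumann} applied to $\dual{\anti{E}}{E}$, (i)$\Rightarrow$(ii) is \eqref{C:Schwarz} applied to $\widetilde A$, and (ii)$\Rightarrow$(iii) is the estimate $M_y=M\|y\|^2$. For the norm identity \eqref{E:AN-norm} you take a genuinely different route: the paper shows $\dual{A_Ny}{y}\leq M$ for $\|y\|\leq 1$ via the quadratic-form formula \eqref{E:ANyy-2} and then uses $\abs{\dual{A_Ny}{z}}^2\leq\dual{A_Ny}{y}\dual{A_Nz}{z}$, whereas you prove $\|A_N\|=\|J\|^2$ from the factorization $A_N=JJ^*$ and compute $\|J\|$ on the dense subspace $\ran A$. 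Your variant is correct; the one point you should make explicit is that the weak-$*$ continuous extension $J$ constructed in Theorem~\ref{T:Krein-von Neumann} is also norm bounded (its adjoint $J^*$ exists, or: $J$ maps the weakly compact unit ball of $\hila$ onto a $\sigma(\anti{E},E)$-bounded, hence norm-bounded set), so that it coincides with the norm-continuous extension of $Ax\mapsto Ax$ and its norm is indeed the supremum over $\ran A$. Both proofs use \eqref{C:Schwarz} for the inequality relating $\|A_N\|$ to the infimum, so this is a legitimate alternative rather than a defect.

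The genuine gap is in the commutation part. The hypothesis of Theorem~\ref{T:AN-commute} is not that the spectrum of $BC$ as an operator on $E$ is bounded; it is that the spectrum of the \emph{restriction} $BC|_{\dom A}\colon\dom A\to\dom A$ is bounded, and in the proof this is used to conclude that $t^2I+BC|_{\dom A}$ maps $\dom A$ \emph{onto} $\dom A$ for large $t$, so that the set \eqref{E:ranXXX} equals $\ran A\times\ran A$. Bounded invertibility of $t^2I+BC$ on $E$ does not yield this, because $(t^2I+BC)^{-1}$ need not leave the (generally non-closed) subspace $\dom A$ invariant. Concretely, take $E=\ell^1$, let $\dom A$ be the finitely supported sequences and let $BC$ act as the shift $e_n\mapsto e_{n+1}$: the equation $(t^2I+BC)z=e_1$ has no finitely supported solution for any $t$, so the restriction is never surjective and its spectrum is unbounded, although $\|BC\|=1$. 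Hence your claim that the hypothesis is ``automatic'' is unjustified as stated. The conclusion \eqref{E:AB=CA-Banach} is nevertheless true, and there are two ways to close the gap: (a) follow the paper's remark after Theorem~\ref{T:AN-commute}, i.e.\ imitate the proof of \cite{HSdeS}*{Theorem 2.2} to show that $\widehat B,\widehat C$ are bounded with norm bound $\sqrt{r(BC)}$, which gives $\widehat C^{*}=\widehat B^{**}$ and $\widehat B^{*}=\widehat C^{**}$ directly, bypassing the range argument; or (b) repair the range argument by density: the Neumann series shows that $(t^2I+BC)^{-1}$ maps $\dom A$ into the $E$-norm closure of $\dom A$, and since $\dual{Az}{z}\leq\|A_N\|\,\|z\|^2$ on $\dom A$, the map $z\mapsto Az$ is continuous from the $E$-norm into $\|\cdot\|_A$; therefore $\set{A(t^2I+BC)x}{x\in\dom A}$ is dense in $\ran A$, and density of $\ran\Pi_0$ is all that the proof of Theorem~\ref{T:AN-commute} actually requires, since $\ran\Pi$ is closed and contains $\ran\Pi_0$.
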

\begin{proof}
The equivalence between (i) and (iii) is clear from Theorem \ref{T:Krein-von Neumann}. That (i) implies (ii) follows from  \eqref{C:Schwarz} with $M=\|\widetilde A\|$. To see that (ii) implies (iii), fix $y\in E$ and observe that 
\begin{align*}
 \abs{\dual{Ax}{y}}^{2}\leq \|Ax\|^2 \|y\|^2\leq M\cdot\|y\|^2\cdot\dual{Ax}{x},\qquad x\in\dom A.
\end{align*}
Hence (iii) holds with $M_y=M\cdot \|y\|^2$.  

To prove \eqref{E:AN-norm}, let $M_A$ denote the corresponding infimum. It follows from \eqref{C:Schwarz} that $M_A\leq \|A_N\|$. To see the converse, take $y\in E$, $\|y\|\leq 1$, then for every $x\in\dom A$, 
\begin{align*}
\abs{\dual{Ax}{y}}^2\leq \|Ax\|^2\leq M\cdot\dual{Ax}{x},
\end{align*}
hence $\dual{Ay}{y}\leq M$, according to formula \eqref{E:ANyy-2}. Consequently, $\|A_N\|^2\leq M^2$ follows from
\begin{align*}
\sup_{\substack{y,z\in E,\\ \|y\|=\|z\|=1}} \abs{\dual{A_Ny}{z}}^2\leq \sup_{\substack{y,z\in E,\\ \|y\|=\|z\|=1}} \dual{A_Ny}{y}\dual{A_Nz}{z}\leq M^2.
\end{align*}
The rest of the statement follows immediately from Theorem \ref{T:AN-commute}.
\end{proof}
We remark here that implication (ii)$\Rightarrow$(iii) could be deduced also from (iii)$\Rightarrow$(iv) of Theorem \ref{T:Krein-von Neumann} because inequality \eqref{E:AxleqAxx} expresses just that $W(A)$ is bounded in the functional norm, i.e., $W(A)$ is $\beta(\anti{E},E)$-bounded. 

As an immediate consequence of Theorem \ref{T:Banach-extension} we recover \cite{Sebestyen93}*{Theorem 1} concerning positive extendibility of suboperators on a Hilbert space. 
\begin{corollary}\label{T:Hilbert-extension}
Let $\hil$ be a Hilbert space and let $A:\hil\supseteq \dom A\to \hil$ be a linear operator. Then the following assertions are equivalent:
\begin{enumerate}[\upshape (i)]
 \item $A$ has a (continuous) positive extension $\widetilde{A}\in\bh$,
 \item There is a constant $M\geq0$ such that 
 \begin{align*}
  \|Ax\|^2\leq M\cdot\sip{Ax}{x},\qquad x\in\dom A, 
 \end{align*}
 \item For any $y\in \hil$ there exists $M_y\geq0$ such that 
 \begin{align*}
    \abs{\sip{Ax}{y}}^{2}\leq M_y\cdot\sip{Ax}{x},\qquad x\in\dom A.
 \end{align*}
\end{enumerate}
 In any case, there exists is the Krein-von Neumann extension $A_N$ of $A$  whose norm satisfies
 \begin{equation*}
\|A_N\|=\inf\set{M\geq0}{\|Ax\|^2\leq M\cdot \sip{Ax}{x},\quad x\in\dom A}
\end{equation*}
 If $B,C\in \bh$ are bounded operators leaving $\dom A$ invariant such that $C^*\!A\subset AB$ and $B^*\!A\subset AC$ then 
the Krein-von Neumann extension of $A$ satisfies
\begin{equation*}
C^*\!A_N= A_NB,\qquad\mbox{and} \qquad B^*\!A_N=A_NC.
\end{equation*}
\end{corollary}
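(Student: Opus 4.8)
The plan is to obtain the corollary as a direct specialization of Theorem \ref{T:Banach-extension} to the case when the underlying Banach space is the Hilbert space $\hil$ itself. The first step is to record the Riesz identification: by the Riesz representation theorem the conjugate topological dual $\anti{\hil}$ is isometrically isomorphic to $\hil$, and under this identification the anti-duality pairing $\dual{f}{x}$ becomes the inner product $\sip{f}{x}$. Consequently an operator $A:\hil\supseteq\dom A\to\hil$ is literally the same datum as an operator $A:\hil\supseteq\dom A\to\anti{\hil}$, the positivity requirement $\sip{Ax}{x}\geq0$ coincides with $\dual{Ax}{x}\geq0$, and the functional norm on $\anti{\hil}$ agrees with the Hilbert norm. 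Since $\hil$ is a Banach space, the anti-dual pair $\dual{\anti{\hil}}{\hil}$ is $w^*$-sequentially complete by the Banach--Steinhaus theorem, so Theorem \ref{T:Banach-extension} is applicable.

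Under this dictionary conditions (i), (ii), (iii) of the corollary become verbatim the corresponding conditions of Theorem \ref{T:Banach-extension}, once one checks that the class $\mathscr{L}(\hil;\anti{\hil})$ of weakly continuous operators corresponds exactly to the class $\bh$ of bounded operators on $\hil$, with equal norms. This correspondence is the one mildly technical point: every bounded operator is weak--weak continuous, and conversely every $\sigma(\hil,\hil)$--$\sigma(\hil,\hil)$ continuous linear map is bounded (for instance by the closed graph theorem, since weak continuity forces a closed graph). With this identification in hand, the equivalence of (i)--(iii) and the existence of the smallest (Krein--von Neumann) positive extension $A_N$ follow immediately, and because the Riesz identification is isometric the norm formula \eqref{E:AN-norm} of Theorem \ref{T:Banach-extension} transcribes directly into the asserted expression for $\|A_N\|$, the infimum running over the same constants $M$ that appear in $\|Ax\|^2\leq M\sip{Ax}{x}$.

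For the commutation statement the only thing left to verify is that the abstract adjoint used in Theorem \ref{T:Banach-extension} matches the ordinary Hilbert-space adjoint under the Riesz identification. Indeed, for $C\in\bh$ the transpose with respect to the anti-duality is determined by $\dual{C^*f}{x}=\dual{f}{Cx}$, which upon translating the pairings into inner products reads $\sip{C^*f}{x}=\sip{f}{Cx}$, so that $C^*$ is precisely the Hilbert-space adjoint of $C$; the same applies to $B$. Hence the hypotheses $C^*A\subset AB$ and $B^*A\subset AC$ and the conclusions $C^*A_N=A_NB$, $B^*A_N=A_NC$ are exactly those of Theorem \ref{T:Banach-extension}, and the final assertion follows at once. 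I expect the continuity-and-adjoint bookkeeping (the correspondence $\mathscr{L}(\hil;\anti{\hil})\cong\bh$ and the matching of adjoints) to be the only place demanding genuine care; everything else is a faithful transcription of the Banach-space theorem.
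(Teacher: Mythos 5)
Your proposal is correct and follows exactly the route the paper intends: the paper states this corollary as an immediate consequence of Theorem \ref{T:Banach-extension} specialized to $E=\hil$ via the Riesz identification of $\anti{\hil}$ with $\hil$, which is precisely your argument. The bookkeeping you supply (weak--weak continuous $=$ bounded, anti-duality adjoint $=$ Hilbert-space adjoint, isometry of the identification) is the routine content the paper leaves implicit, and you verify it correctly.
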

Note that for a positive operator $A\in\bh$ one has $\|A\|\leq M$ if and only if $A\leq M \cdot I$, $I$ being the identity operator of $\hil$. Combining this observation with Theorem \ref{T:max-extension} one immediately gets the following statement.
\begin{corollary}
Let $A:\dom A\to\hil$ be a positive operator  satisfying  the conditions of Corollary \ref{T:Hilbert-extension}. Then, for every constant $M\geq\|A_N\|$ there is a positive extension $\AmaxM$ of $A$ with $\|\AmaxM\|\leq M$ such that  for any positive extension $\widetilde A$ of $A$, $\|\widetilde A\|\leq M$ one has $\widetilde A\leq \AmaxM$. In  other words,
\begin{equation*}
\AmaxM=\max\set{\widetilde A\in\bh}{\widetilde A\geq0, A \subset \widetilde A, \|\widetilde A\|\leq M}.
\end{equation*}
Furthermore, one has equality 
\begin{equation*}
[A_N,\AmaxM]=\set{\widetilde A\in\bh}{\widetilde A\geq0, A \subset \widetilde A, \|\widetilde A\|\leq M}.
\end{equation*}
\end{corollary}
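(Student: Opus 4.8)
The plan is to deduce this corollary directly from Theorem \ref{T:max-extension} by taking the bounding operator to be a scalar multiple of the identity. Concretely, I would set $B:=M\cdot I\in\bh$, work in the Hilbert space anti-dual pair where $E=F=\hil$ and $\lef=\bh$, and then read off both assertions from that theorem. Since the identity is weakly continuous, self-adjoint and positive, $B=M\cdot I$ is an admissible choice of bounding operator.

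First I would verify the single hypothesis of Theorem \ref{T:max-extension}, namely $A_N\leq B$. By assumption $M\geq\|A_N\|$, and $A_N$ is positive, so the elementary spectral observation recorded immediately before the corollary---that for a positive $T\in\bh$ one has $\|T\|\leq M$ if and only if $T\leq M\cdot I$---gives $A_N\leq M\cdot I=B$. Thus Theorem \ref{T:max-extension} applies and produces the positive operator $B-(B-A)_N$, which I denote $\AmaxM$. It satisfies $0\leq\AmaxM\leq M\cdot I$, is an extension $A\subset\AmaxM$, and enjoys the maximality property together with the interval identity \eqref{E:[AN,AB]}, all stated with $B=M\cdot I$.

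The only remaining step is to translate the order bound appearing in Theorem \ref{T:max-extension} into the norm bound stated in the corollary. Applying the same observation once more to an arbitrary positive extension $\widetilde A$, the condition $0\leq\widetilde A\leq M\cdot I$ is equivalent to $\widetilde A\geq0$ together with $\|\widetilde A\|\leq M$, so that
\[
\set{\widetilde A\in\bh}{0\leq\widetilde A\leq M\cdot I,\ A\subset\widetilde A}=\set{\widetilde A\in\bh}{\widetilde A\geq0,\ A\subset\widetilde A,\ \|\widetilde A\|\leq M}.
\]
Substituting this identification into the maximality statement and into the interval equality of Theorem \ref{T:max-extension} yields exactly the two displayed formulas of the corollary.

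I expect no genuine obstacle here, as the entire mathematical content is already contained in Theorem \ref{T:max-extension}; the proof is essentially a specialization. The one point requiring care is the repeated back-and-forth passage between the operator inequality $\widetilde A\leq M\cdot I$ and the norm estimate $\|\widetilde A\|\leq M$, which is precisely the spectral fact stated before the corollary, and a quick check that $M\cdot I$ is a positive member of $\bh$ so that it is a legitimate choice of $B$. Both are immediate, so the argument should be short and clean.
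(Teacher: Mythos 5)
Your proposal is correct and coincides exactly with the paper's argument: the paper states the corollary as an immediate consequence of Theorem \ref{T:max-extension} applied with $B=M\cdot I$, using the same observation that a positive operator $\widetilde A\in\bh$ satisfies $\|\widetilde A\|\leq M$ if and only if $\widetilde A\leq M\cdot I$. Your write-up simply makes explicit the specialization the paper leaves to the reader.
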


As a concluding result of this section we prove Halmos'  result on positive completions of incomplete matrices (see \cite{Halmos}*{\S 5, Corollary 2}). 
 \begin{corollary}\label{C: Halmos-recover}
 Let $A:=\opmatrix{A_{11}}{A_{12}}{A_{21}}{*}$ be an ``incomplete'' operator matrix in a Hilbert space $H=H_0\oplus H_0^{\perp}$ with $\dom A=H_0$. Assume that $A_11\geq0$ and that $A_{21}=A_{21}^*$, then the following assertions are equivalent:
 \begin{enumerate}[\upshape (i)]
 \item there exists $A_{22}\geq0$ that makes $A$ positive, i.e., $\opmatrix{A_{11}}{A_{12}}{A_{21}}{A_{22}}\geq0$,
 \item $A_{21}^*A_{21}\leq M\cdot A_{11}$, for some constant $M\geq0$,
 \item $\ran A_{21}^*\subseteq \ran A_{11}^{1/2}$.
 \end{enumerate}
 \end{corollary}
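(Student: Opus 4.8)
The plan is to reduce the whole statement to the Hilbert space version of the main theorem, Corollary~\ref{T:Hilbert-extension}, applied to $A$ regarded as a positive operator $H\supseteq H_0\to H$ with $\dom A=H_0$, together with the classical range-inclusion theorem of Douglas. First I would record the elementary block computation. For $x\in H_0$ one has $Ax=(A_{11}x,A_{21}x)\in H_0\oplus H_0^\perp$, so by orthogonality
\begin{equation*}
\|Ax\|^2=\|A_{11}x\|^2+\|A_{21}x\|^2,\qquad \sip{Ax}{x}=\sip{A_{11}x}{x},
\end{equation*}
the cross term $\sip{A_{21}x}{x}$ vanishing since $A_{21}x\in H_0^\perp$ while $x\in H_0$. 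In particular $\sip{Ax}{x}\geq0$, so $A$ is genuinely a positive operator on $\dom A=H_0$, and Corollary~\ref{T:Hilbert-extension} applies.

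Next I would set up the dictionary between positive extensions of $A$ and positive completions of the matrix. Any positive $\widetilde A\in\bh$ extending $A$ is self-adjoint by Proposition~\ref{P:Hellinger}, and since it reproduces the first block-column $(A_{11},A_{21})$ on $H_0$, its block form with respect to $H=H_0\oplus H_0^\perp$ must be $\opmatrix{A_{11}}{A_{21}^*}{A_{21}}{A_{22}}$ with $A_{22}\geq0$; conversely every such positive matrix restricts to $A$ on $H_0$. (This also explains the symmetric-partial-matrix hypothesis: self-adjointness forces the upper-right block to equal $A_{21}^*$.) Thus our~(i) is precisely condition~(i) of Corollary~\ref{T:Hilbert-extension}.

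With these preliminaries the equivalence (i)$\Leftrightarrow$(ii) follows from Corollary~\ref{T:Hilbert-extension}, whose condition~(ii) reads $\|A_{11}x\|^2+\|A_{21}x\|^2\leq M\sip{A_{11}x}{x}$ for $x\in H_0$. I would reconcile this with our~(ii), namely $\|A_{21}x\|^2\leq M\sip{A_{11}x}{x}$: one implication merely discards the nonnegative summand $\|A_{11}x\|^2$, while for the other the scalar Schwarz inequality~\eqref{C:Schwarz}, applied to the bounded positive operator $A_{11}$, gives $\|A_{11}x\|^2\leq\|A_{11}\|\sip{A_{11}x}{x}$, so that the extra term is absorbed at the cost of replacing $M$ by $M+\|A_{11}\|$. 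Hence our~(ii) is equivalent to Corollary~\ref{T:Hilbert-extension}(ii), and therefore to~(i).

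It remains to treat (ii)$\Leftrightarrow$(iii), which carries the genuine content and is where I would invoke Douglas' range-inclusion theorem. Writing $A_{21}^*A_{21}=A_{21}^*(A_{21}^*)^*$ and $A_{11}=A_{11}^{1/2}(A_{11}^{1/2})^*$, the estimate $A_{21}^*A_{21}\leq M\,A_{11}$ for some $M\geq0$ is, by Douglas' theorem applied with $S=A_{21}^*$ and $T=A_{11}^{1/2}$, equivalent to the inclusion $\ran A_{21}^*\subseteq\ran A_{11}^{1/2}$, which is~(iii). The only real obstacle I anticipate is bookkeeping rather than analysis: pinning down the correspondence between positive extensions of the partially defined operator $A$ and positive $2\times2$ completions of the matrix, since $A$ only ``sees'' its first block-column on the domain $H_0$. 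Once that dictionary is fixed, the analytic substance is delivered entirely by Corollary~\ref{T:Hilbert-extension} together with Douglas' factorization lemma.
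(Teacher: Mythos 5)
Your proposal is correct and takes essentially the same route as the paper: both reduce the completion problem to Corollary~\ref{T:Hilbert-extension} (absorbing the extra $\|A_{11}x\|^2$ term via the Schwarz-type inequality $A_{11}^2\leq\|A_{11}\|\cdot A_{11}$) and settle (ii)$\Leftrightarrow$(iii) by Douglas' factorization theorem. The only difference is organizational: the paper proves (i)$\Rightarrow$(ii) directly from $B^2\leq\|B\|\cdot B$ applied to a positive completion $B$, whereas you route both directions of (i)$\Leftrightarrow$(ii) through the extension/completion dictionary, which is the same substance.
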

\begin{proof}
Assume first (i) and prove (ii): Suppose that $B:=\opmatrix{A_{11}}{A_{12}}{A_{21}}{A_{22}}\geq0$. Since for every positive operator $B$ one has $B^2\leq \|B\|\cdot B$, we have for $x\in H_0$ that 
\begin{align*}
\|A_{11}x\|^2+\|A_{21}x\|^2&=\left\|B\pair x0\right\|^2\leq \|B\| \sip[\bigg]{B\pair x0}{\pair x0}=\|B\|\sip{A_{11}x}{x}.
\end{align*}
Since $A_{11}^2\leq \|A_{11}\|\cdot A_{11}$, this obviously gives (ii).

The equivalence of (ii) and (iii) follows immediately from the Douglas factorization theorem \cite{Douglas}. Finally, assuming (ii) we clearly have 
\begin{align*}
\left\|A\pair x0\right\|^2\leq \|A_{11}\|\sip{A_{11}x}{x}+M\sip{A_{11}x}{x}, \qquad x\in H_0,
\end{align*}
hence $A$ fulfills (ii) of Corollary \ref{T:Hilbert-extension}. Consequently, $A$ has a bounded positive  extension, or equivalently, $A$ can be filled in to get a positive operator. 
\end{proof}
Finally we mention that Friedrichs extension of unbounded operators acting between reflexive Banach spaces and their duals has been investigated by Farkas and Matolcsi in \cite{Farkas-Matolcsi}.

\section{Functional extensions on *-algebras}\label{S: funct}
The goal of this section is to show that how abstract operator extensions can be applied for positive functionals on a *-algebra.

Let $\alg$ be a not necessarily unital complex $^*$-algebra. Recall that a linear functional $f$ on $\alg$ is called \emph{representable} if there exist a Hilbert space $\hil$, a *-representation  (that is, a *-homomorphism) $\pi:\mathscr{A}\to\bh$ and  a vector $\xi\in\hil$ such that
$$f(a)=\sip{\pi(a)\xi}{\xi},\qquad a\in\mathscr{A}.$$
 It is clear that every representable functional is positive, nevertheless the converse is not true in such generality.
 
Consider now a left ideal $\M$ of $\alg$ and  a linear functional $f:\M\to\dupC$. In this section we provide  necessary and sufficient conditions under which $f$ admits a representable extension to $\alg$ (cf. also \cite{SSZT} for the Banach-* algebra setting). Recall that if $f:\M\to\dupC$ is a linear functional, then we can associate an operator $A:\M\to \bar\alg^*$ to $f$ by setting 
\begin{equation}\label{E:A_from_f}
 \dual{Aa}{x}:=f(x^*a),\qquad x\in \alg, a\in \M.
\end{equation}
Clearly, $A$ is positive if and only if $f$ is positive, i.e., $f(a^*a)\geq0$ holds for all $a\in\M$. 

In what follows we are exclusively interested in representable extensions of functionals. 
Recall that $f$ is said to be \emph{Hilbert bounded} if there is constant $M\geq 0$ such that 
\begin{equation}\label{E:Hilbertbounded}
 \abs{f(a)}^2\leq Mf(a^*a)\qquad \textrm{for all $a\in \M$},
\end{equation}
and \emph{admissible} if for any $x$ in $\alg$ there exists $\lambda_x\geq 0$ such that 
 \begin{equation}\label{E:admissible}
    f(a^*x^*xa)\leq \lambda_xf(a^*a)\qquad \textrm{for all $a\in\M$.}
 \end{equation}

\begin{theorem}\label{T:functext}
Let $\alg$ be a ${}^*$-algebra, $\M\subseteq\alg$ be a left ideal, and let $f:\M\to\dupC$ be a linear functional. The following assertions are equivalent:
\begin{enumerate}[\upshape (i)]
 \item there is a representable functional $\widetilde{f}\in\alg^*$ extending $f$,
 \item $f$ is  admissible and Hilbert bounded.
\end{enumerate} 
If there is any, then there is a minimal one (denoted by $f_N$) among the set of representable extensions.   
\end{theorem}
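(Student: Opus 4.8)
The plan is to move the whole problem into the anti-dual pair $\dual{\bar{\alg}^*}{\alg}$, which is $w^*$-sequentially complete because $\bar{\alg}^*$ is a full algebraic conjugate dual, and to study the positive operator $A$ attached to $f$ by $\dual{Aa}{x}:=f(x^*a)$. Recall that $A$ is positive exactly when $f$ is, and that both (i) and (ii) force $f(a^*a)\geq0$, so throughout $f$ may be taken positive and hence $A$ symmetric on $\M$.

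For the easy implication (i)$\Rightarrow$(ii) I would read the two inequalities off a representation $\tilde f(x)=\sip{\pi(x)\xi}{\xi}$. Hilbert boundedness comes from Cauchy--Schwarz, $\abs{f(a)}^2=\abs{\sip{\pi(a)\xi}{\xi}}^2\leq\|\xi\|^2\|\pi(a)\xi\|^2=\|\xi\|^2f(a^*a)$, and admissibility from boundedness of the representing operators, $f(a^*x^*xa)=\|\pi(x)\pi(a)\xi\|^2\leq\|\pi(x)\|^2f(a^*a)$; thus $M=\|\xi\|^2$ and $\lambda_x=\|\pi(x)\|^2$ work.

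For (ii)$\Rightarrow$(i) I would first feed the hypotheses into Theorem \ref{T:Krein-von Neumann}. Since $\M$ is a left ideal, $y^*a\in\M$, and chaining the two inequalities gives $\abs{f(y^*a)}^2\leq Mf(a^*yy^*a)\leq M\lambda_{y^*}f(a^*a)$, which is exactly condition (iv). This yields the Krein--von Neumann extension $A_N=JJ^*$ on the auxiliary space $\hila$ (the completion of $\ran A$ under $\sipa{Aa}{Ab}:=f(b^*a)$). The heart of the matter is to upgrade this positive extension to a representable one by building a GNS-type structure on $\hila$. I would define $\pi(x)(Aa):=A(xa)$; well-definedness follows by testing against $\alg$, and admissibility shows $\pi(x)$ is bounded with $\|\pi(x)\|^2\leq\lambda_x$, so it extends to $\pi(x)\in\bha$. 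That $\pi$ is a $^*$-homomorphism is routine, the identity $\pi(x)^*=\pi(x^*)$ being where symmetry of $A$ enters. Hilbert boundedness, in turn, makes $Aa\mapsto f(a)$ a well-defined bounded functional on $\ran A$, so Riesz supplies $\eta\in\hila$ with $f(a)=\sipa{Aa}{\eta}$.

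The step that glues everything together is the identity $J^*x=\pi(x)\eta$ for every $x\in\alg$, which I would prove by checking that both sides have the same inner product $f(x^*b)$ against each $Ab$ ($b\in\M$) and invoking density of $\ran A$. Setting $f_N(x):=\sipa{\pi(x)\eta}{\eta}$ then produces a functional that is representable by construction (representation $\pi$, vector $\eta$); its associated operator is precisely $A_N$, since $\dual{A_{f_N}x}{y}=f_N(y^*x)=\sipa{\pi(x)\eta}{\pi(y)\eta}=\sipa{J^*x}{J^*y}=\dual{A_Nx}{y}$; and it extends $f$ because the special case $x=a\in\M$ gives $\pi(a)\eta=J^*a=Aa$, whence $f_N(a)=\sipa{Aa}{\eta}=f(a)$. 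Minimality is then inherited: any representable extension $\tilde f$ has a positive associated operator extending $A$, so $A_{f_N}=A_N\leq A_{\tilde f}$ by Theorem \ref{T:Krein-von Neumann}, i.e.\ $\tilde f-f_N$ is positive. The main obstacle I anticipate is exactly this construction of $\pi$ and $\eta$ in the possibly non-unital setting, where one cannot evaluate at a unit; the saving observation is that admissibility is precisely what bounds $\pi$, Hilbert boundedness is precisely what yields $\eta$, and symmetry of $A$ is what forces $\pi(a)\eta=Aa$.
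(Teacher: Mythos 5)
Your proposal is correct, and for the core construction it coincides with the paper's own proof: the same chained inequality $\abs{f(y^*a)}^2\leq M\lambda_{y^*}\,f(a^*a)$ verifies condition (iv) of Theorem \ref{T:Krein-von Neumann}; the same GNS-type data on $\hila$ (the homomorphism $\pi$ bounded by admissibility, the Riesz vector $\eta=\zeta_f$ supplied by Hilbert boundedness, and the key identity $\pi(x)\eta=J^*x$ proved by testing against $\ran A$) produce the representable extension $f_N$, which restricts to $f$ because $J^*a=Aa$ on $\M$. The one place where you genuinely depart from the paper is minimality. The paper computes the quadratic values
\begin{equation*}
f_N(x^*x)=\sup\set[\big]{\abs{f(x^*a)}^2}{a\in\M,\ f(a^*a)\leq1}
\end{equation*}
from density of $\ran A$ in $\hila$, and then compares with $\widetilde f(x^*x)$ using the fact that a representable functional defined on all of $\alg$ coincides with its own Krein--von Neumann extension. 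You instead observe that the associated operator $A_{\widetilde f}$ of any representable extension $\widetilde f$ is an everywhere defined positive, hence weakly continuous, extension of $A$ (the left-ideal property enters again here, since $\widetilde f(y^*a)=f(y^*a)$ needs $y^*a\in\M$), so the operator-level minimality statement of Theorem \ref{T:Krein-von Neumann} gives $A_N\leq A_{\widetilde f}$, which after your identification $A_{f_N}=A_N$ is exactly the assertion that $\widetilde f-f_N$ is a positive functional. Both arguments are sound; yours is more direct and exhibits the functional statement as a literal corollary of the operator one, while the paper's route yields as a by-product the explicit formula \eqref{E:f(x*x)} for $f_N$ on positive elements, which the paper records as a separate corollary.
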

\begin{proof}
Implication  (i)$\Rightarrow$(ii) is trivial so we only have to prove   that if a functional $f$ on a left ideal $\ideal$ of $\alg$ satisfies \eqref{E:Hilbertbounded} and \eqref{E:admissible} then  $f$ admits a minimal representable extension. We are going to imitate the proof of \cite{Sebestyen84}*{Theorem 1}, combined with the construction of the proof of Theorem \ref{T:Krein-von Neumann}: suppose that  $f$ is admissible and Hilbert bounded and consider the  positive operator $A:\ideal\to\anti\alg$, defined by \eqref{E:A_from_f}.
Since the following chain of inequalities holds true for all $a\in\M$ and $x\in\alg$
\begin{align*}
\abs{\dual{Aa}{x}}^2=\abs{f(x^*a)}^2\leq Mf(a^*xx^*a)\leq \lambda_{x^*}M f(a^*a)=\lambda_{x^*} M\dual{Aa}{a},
\end{align*}
we conclude that $A$ fulfills condition (iv) of Theorem \ref{T:Krein-von Neumann}, and thus the Krein-von Neumann extension $A_N=JJ^*:\alg\to\bar\alg^*$ exists. Consider the auxiliary Hilbert space $\hila$, and for every $x\in \alg$ define the operator $\pi_f(x)\in\bha$ as the continuous extension of the densely defined bounded operator 
\begin{align*}
\pi_f(x)(Aa):=A(xa),\qquad a\in\ideal.
\end{align*}
Note that boundedness of $\pi_f(x)$ is guaranteed by admissibility \eqref{E:admissible}: 
\begin{equation*}
\|\pi_f(x)(Aa)\|_A^2=\dual{A(xa)}{xa}=f(a^*x^*xa)\leq \lambda_xf(a^*a)=\lambda_x\|Aa\|_A^2.
\end{equation*}
We claim that $\pi_f:\alg\to\bha$ is a *-homomorphism. It is straightforward that $\pi_f$ is linear and multiplicative. To see that $\pi_f$ preserves involution, fix $x\in\alg$ and $a,b\in\M$, then
\begin{align*}
\sipa{\pi_f(x)(Aa)}{Ab}&=\dual{A(xa)}{b}=f(b^*xa)\\ &=f((x^*b)^*a)=\dual{Aa}{x^*b}=\sipa{Aa}{\pi_f(x^*)(Ab)},
\end{align*}
hence $\pi_f(x)^*=\pi_f(x^*)$, indeed. Our next goal is to show the existence of a vector $\zeta_f\in\hila$ such that  $f(a)=\sipa{\pi_f(a)\zeta_f}{\zeta_f}$, $a\in\M$. To this aim consider  the densely defined linear functional
\begin{equation}\label{E:phi(Ax)=f(a)}
\phi:H_A\supseteq\ran A\to\dupC;\quad \phi(Aa)=f(a),\qquad a\in\M,
\end{equation}
which  is well defined and continuous because of Hilbert boundedness \eqref{E:Hilbertbounded}:
\begin{equation*}
\abs{\phi(Aa)}^2\leq M f(a^*a)=M\|Aa\|^2_A,\qquad a\in\M.
\end{equation*}
By the Riesz representation theorem, $\phi$ is represented by a unique vector  $\zeta_f\in\hila$:
\begin{equation}\label{E:zetaf-def}
f(a)=\sipa{Aa}{\zeta_f},\qquad a\in\M.
\end{equation}
We claim that 
\begin{equation}\label{E:fN-def}
f_N(x):=\overline{\dual{J\zeta_f}{x}},\qquad x\in\alg
\end{equation}
defines a representable functional  such that $f\subset f_N$. To see this, observe first that the following useful identity holds true
\begin{equation}\label{E:pixzeta}
\pi_f(x)\zeta_f=J^*x,\qquad x\in\alg.
\end{equation}
Indeed, let $x\in \alg$ and $a\in\M$ be fixed. Then by \eqref{E:zetaf-def} we have
\begin{align*}
\sipa{Aa}{\pi_f(x)\zeta_f}=\sipa{A(x^*a)}{\zeta_f}&=f(x^*a)=\dual{Aa}{x}=\dual{J(Aa)}{x}=\sipa{Aa}{J^*x}.
\end{align*}
Consequently, $f(a)=\sipa{Aa}{\zeta_f}=\sipa{J^*a}{\zeta_f}=\overline{\dual{J\zeta_f}{a}}=f_N(a)$
holds for $a\in\M$, thus $f\subset f_N$. On the other hand we have  
\begin{equation*}
\sipa{\pi_f(x)\zeta_f}{\zeta_f}=\sipa{J^*x}{\zeta_f}=\overline{\dual{J\zeta_f}{x}}=f_N(x),\qquad x\in\alg,
\end{equation*}
hence $f_N$ is representable. All that remains is to prove the minimality of  $f_N$. Let us calculate first the value of $f_N$ on positive elements:
\begin{align*}
f_N(x^*x)&=\overline{\dual{J\zeta_f}{x^*x}}=\sipa{J^*(x^*x)}{\zeta_f}=\sipa{J^*x}{J^*x}\\
&=\sup\set{\abs{\sipa{Aa}{J^*x}}^2}{a\in\M, \|Aa\|^2_A\leq 1}\\
&=\sup\set{\abs{f(x^*a)}^2}{a\in\M, f(a^*a)\leq 1},
\end{align*}
for every $x\in\alg$, where we used the density of $\ran A$ in $\hila.$
Let now $\widetilde f$ be any representable extension of $f$. Using identity $\widetilde f=\widetilde f_N$, we conclude that 
\begin{align*}
\widetilde f(x^*x)&=\sup\set{\abs{\widetilde f(x^*a)}^2}{a\in\alg, \widetilde f(a^*a)\leq 1}\\
&\geq \sup\set{\abs{f(x^*a)}^2}{a\in\M, f(a^*a)\leq 1}=f_N(x^*x),
\end{align*}
hence $f_N\leq \widetilde f$. The proof is complete.
\end{proof}
Following the terminology of \cite{SSZT}, we will call $f_N$ the Krein--von Neumann extension of $f$. Note that in the above construction we gained an explicit formula for  $f_N$ and also for its values on positive elements.
\begin{corollary}
Suppose that $f:\M\to\dupC$ is admissible and Hilbert-bounded, then the minimal extension $f_N$ satisfies 
\begin{equation}\label{E:Jzetaf}
f_N(x)=\overline{\dual{J\zeta_f}{x}},\qquad x\in\alg,
\end{equation}
and 
\begin{equation}\label{E:f(x*x)}
f_N(x^*x)= \sup\set{\abs{f(x^*a)}^2}{a\in\M, f(a^*a)\leq 1},
\end{equation}
\end{corollary}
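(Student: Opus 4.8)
The plan is to recognize that both displayed identities already emerged inside the construction carried out in the proof of Theorem~\ref{T:functext}; the present corollary simply records them for later reference. Accordingly no new machinery is required, and I would only recall the relevant steps and confirm that nothing beyond admissibility and Hilbert boundedness enters. For the first formula~\eqref{E:Jzetaf} there is in fact nothing to prove: it is verbatim the defining relation~\eqref{E:fN-def} of $f_N$, where $\zeta_f\in\hila$ is the Riesz representative of the continuous functional~\eqref{E:phi(Ax)=f(a)} and $J$ is the canonical embedding appearing in the factorization $A_N=JJ^*$ of the Krein--von Neumann extension of the operator $A$ attached to $f$ via~\eqref{E:A_from_f}.

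For~\eqref{E:f(x*x)} I would reproduce the computational chain. Beginning from~\eqref{E:Jzetaf} and passing to the adjoint, one writes $f_N(x^*x)=\sipa{J^*(x^*x)}{\zeta_f}$. The crucial manipulation is to rewrite $J^*(x^*x)$: invoking the identity~\eqref{E:pixzeta}, that is $\pi_f(x)\zeta_f=J^*x$, together with the multiplicativity and the involution-preserving property of $\pi_f$, one obtains $J^*(x^*x)=\pi_f(x^*x)\zeta_f=\pi_f(x)^*J^*x$, whence $f_N(x^*x)=\sipa{J^*x}{J^*x}=\|J^*x\|_A^2$. Expanding this last norm by density of $\ran A$ in $\hila$ and using $\sipa{Aa}{J^*x}=\dual{JAa}{x}=\dual{Aa}{x}=f(x^*a)$ as well as $\|Aa\|_A^2=\dual{Aa}{a}=f(a^*a)$ then delivers exactly the supremum on the right-hand side of~\eqref{E:f(x*x)}.

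I expect the only step carrying any subtlety to be the passage $J^*(x^*x)=\pi_f(x)^*J^*x$, which rests on combining the vector identity~\eqref{E:pixzeta} with the previously verified fact that $\pi_f$ is a multiplicative ${}^*$-homomorphism on $\alg$; once this is in hand, the remainder is a routine unwinding of the definitions of $A$, of $J$, and of the inner product $\sipa{\cdot}{\cdot}$ on $\hila$. Since all of these ingredients are already established within the proof of Theorem~\ref{T:functext}, the corollary follows at once under the sole hypotheses of admissibility and Hilbert boundedness.
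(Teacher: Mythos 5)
Your proposal is correct and follows essentially the same route as the paper: the paper likewise treats \eqref{E:Jzetaf} as nothing more than the defining relation \eqref{E:fN-def}, and obtains \eqref{E:f(x*x)} by the very chain $f_N(x^*x)=\sipa{J^*(x^*x)}{\zeta_f}=\sipa{J^*x}{J^*x}$ followed by expansion over the dense range $\ran A$, carried out inside the minimality part of the proof of Theorem \ref{T:functext}. Your only addition is to make explicit the intermediate step $J^*(x^*x)=\pi_f(x)^*J^*x$ via \eqref{E:pixzeta} and the ${}^*$-homomorphism property of $\pi_f$, which the paper leaves implicit.
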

In case when the algebra has a unit element the theorem can be more easily  formulated. In fact, the following simple formula may suggest that extension theory of functionals and extension theory of operators fit nicely together, indeed.
\begin{corollary}
Assume that $\alg$ is a unital ${}^*$-algebra with unit $1\in\alg$. Assume further that $f:\M\to\dupC$ is admissible. Then $f$ is Hilbert bounded, and its Krein-von Neumann extension satisfies
\begin{equation}\label{E:AN1}
f_N(x)= \overline{\dual{A_N1}{x}},\qquad x\in \alg.
\end{equation}
\end{corollary}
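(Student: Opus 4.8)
The plan is to obtain the corollary from Theorem \ref{T:functext} together with the explicit construction carried out in its proof. Admissibility is assumed outright, so the only hypothesis of Theorem \ref{T:functext} still to be secured is Hilbert boundedness; once it is available, that theorem produces the minimal representable extension $f_N$, given by the formula \eqref{E:fN-def}, namely $f_N(x)=\overline{\dual{J\zeta_f}{x}}$, where $\zeta_f\in\hila$ is the Riesz vector of \eqref{E:zetaf-def} and $A_N=JJ^*$ is the Krein--von Neumann extension of the associated operator $A$. I would therefore split the proof into two essentially independent tasks: establishing Hilbert boundedness so that the whole machinery applies, and then identifying $J\zeta_f$ with $A_N1$ so as to pass from \eqref{E:fN-def} to the asserted formula \eqref{E:AN1}. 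The minimality of $f_N$ and the inclusion $f\subset f_N$ then come along automatically from Theorem \ref{T:functext}, since they rest on exactly the same data.

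The formula \eqref{E:AN1} is the easy half, and I would dispatch it directly. The representation built in the proof of Theorem \ref{T:functext} satisfies $\pi_f(x)(Aa)=A(xa)$; taking $x=1$ shows that $\pi_f(1)$ acts as the identity on the dense subspace $\ran A$, hence $\pi_f(1)=I$ on $\hila$. On the other hand, identity \eqref{E:pixzeta} asserts $\pi_f(x)\zeta_f=J^*x$ for every $x\in\alg$. Evaluating this at $x=1$ and invoking $\pi_f(1)=I$ yields $\zeta_f=J^*1$, and applying $J$ gives $J\zeta_f=JJ^*1=A_N1$. Substituting this into \eqref{E:fN-def} produces precisely $f_N(x)=\overline{\dual{A_N1}{x}}$, which is \eqref{E:AN1}.

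The substantial step --- and the one I expect to be the main obstacle --- is showing that admissibility alone forces Hilbert boundedness in the presence of a unit. The tension is that Hilbert boundedness $\abs{f(a)}^2\le M f(a^*a)$ controls the \emph{linear} value of $f$ on a single $a$, whereas admissibility only bounds the \emph{quadratic} expressions $f\bigl((xa)^*(xa)\bigr)$; the unit is what must bridge this gap, through the decomposition $f(a)=f(1\cdot a)$. The route I would attempt is to exploit that, for fixed $a\in\M$, the functional $x\mapsto f(a^*xa)$ is positive on the \emph{unital} algebra $\alg$ with $f(a^*x^*xa)\le\lambda_x f(a^*a)$ and $\lambda_x$ independent of $a$, so that its Cauchy--Schwarz inequality may legitimately use $1\in\alg$; the delicate point is to convert such two-sided estimates for $f(a^*xa)$ into the one-sided bound for $f(a)$ required by \eqref{E:Hilbertbounded}, realizing $f(a)$ as a pairing $\sipa{Aa}{\zeta}$ against a vector $\zeta$ that plays the role of ``$A1$''. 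This is exactly the place where the unitality of $\alg$ enters decisively, and where, absent a unit, Hilbert boundedness genuinely has to be imposed separately, as in Theorem \ref{T:functext}.
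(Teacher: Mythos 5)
Your first half --- the derivation of \eqref{E:AN1} --- is correct and is in substance the paper's own argument: everything reduces to the identity $\zeta_f=J^*1$. The paper obtains this directly on the dense subspace $\ran A$, computing for $a\in\M$
\begin{equation*}
\sipa{Aa}{J^*1}=\dual{Aa}{1}=f(a)=\sipa{Aa}{\zeta_f},
\end{equation*}
and then $\dual{A_N1}{x}=\sipa{J^*1}{J^*x}=\sipa{\zeta_f}{J^*x}=\dual{J\zeta_f}{x}$; your detour through $\pi_f(1)=I$ and identity \eqref{E:pixzeta} is an equivalent way of getting $\zeta_f=J^*1$, so there is no issue there.

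The genuine gap is the step you yourself flagged as the main obstacle: you never actually prove that admissibility implies Hilbert boundedness; you only outline a strategy you hope would work. Moreover, that gap cannot be closed in the stated generality, because the implication is false for a proper left ideal. Take $\alg=\ell^\infty$ with pointwise operations and conjugation, $\M=c_{00}$ the ideal of finitely supported sequences, and $f(a)=\sum_n a(n)$. Then $f$ is positive and admissible, since $f(a^*x^*xa)=\sum_n\abs{x(n)}^2\abs{a(n)}^2\le\|x\|_\infty^2 f(a^*a)$; but taking $a$ to be the indicator of $\{1,\dots,N\}$ gives $\abs{f(a)}^2=N^2$ while $f(a^*a)=N$, so no constant $M$ as in \eqref{E:Hilbertbounded} exists. (A fortiori $f$ has no representable extension at all: any representable $\widetilde f(a)=\sip{\pi(a)\xi}{\xi}$ satisfies $\abs{\widetilde f(a)}^2\le\|\xi\|^2\,\widetilde f(a^*a)$, which would force Hilbert boundedness of $f$, so the extension $f_N$ in \eqref{E:AN1} does not even exist here.) The reason you could not reconstruct the missing step is that the paper's own proof of it is the one-line Cauchy--Schwarz estimate $\abs{f(a)}^2=\abs{f(1^*a)}^2\le f(1)f(a^*a)$, which tacitly evaluates $f$ at the unit; this is legitimate only if $1\in\M$, and a left ideal containing $1$ is all of $\alg$. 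In other words, the corollary is really a statement about the case $\M=\alg$, where Cauchy--Schwarz for the positive functional $f$ gives Hilbert boundedness instantly and your proof closes; for a proper ideal $\M$ both your proposal and the printed proof break down, and the statement itself fails.
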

\begin{proof}
Take $a\in\M$, then 
\begin{equation*}
\abs{f(a)}^2=\abs{f(1^*a)}^2\leq f(1)f(a^*a),
\end{equation*}
hence $f$ is Hilbert  bounded. On the other hand, for $a\in\M$, 
\begin{equation*}
\sipa{Aa}{J^*1}=\dual{Aa}{1}=f(a)=\sipa{Aa}{\zeta_f},
\end{equation*}
hence $J^*1=\zeta_f$. This yields
\begin{align*}
\dual{A_N1}{x}=\sipa{J^*1}{J^*x}=\sipa{\zeta_f}{J^*x}=\dual{J\zeta_f}{x},\qquad x\in\alg,
\end{align*}
and hence \eqref{E:AN1}.
\end{proof} 
In the next theorem we provide the analogue of Theorem \ref{T:max-extension}.
\begin{theorem}
Let $f:\M\to\dupC$ be an admissible and Hilbert bounded functional and fix any  representable functional $g\in\alg^*$ such that $f_N\leq g$. Then there is a representable extension $f^g_{max}\in\alg^*$ of $f$ such that $f^g_{max}\leq g$, and for every representable extension $\widetilde f$ of $f$ one has $\widetilde f\leq f^g_{max}$. In other words, 
\begin{align*}
f^g_{max}=\max\set{\widetilde f\in\alg^{\sharp}}{\widetilde f\leq g, f\subset \widetilde f}.
\end{align*}
Furthermore, a representable  functional $\widetilde f\leq g$ is an extension of $f$ if and only if $f_N\leq \widetilde f\leq f^g_{max}$:
\begin{equation*}
[f_N, f^g_{max}]=\set{\widetilde f\in\alg^{\sharp}}{\widetilde f\leq g, f\subset \widetilde f}
\end{equation*}
\end{theorem}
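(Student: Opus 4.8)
The plan is to transcribe the proof of Theorem~\ref{T:max-extension} through the dictionary between functionals and their induced operators, and to isolate the one genuinely new point, namely that the maximal \emph{operator} extension descends to a \emph{representable} functional. Write $A:=A_f:\M\to\bar\alg^*$ for the positive operator attached to $f$ by \eqref{E:A_from_f} and $B:=A_g:\alg\to\bar\alg^*$ for the (everywhere defined) positive operator attached to the representable functional $g$. As is implicit in the proof of Theorem~\ref{T:functext}, the operator induced by $f_N$ is exactly $A_N$, so the hypothesis $f_N\leq g$ reads $\dual{A_N a}{a}=f_N(a^*a)\leq g(a^*a)=\dual{Ba}{a}$ for all $a\in\alg$, i.e. $A_N\leq B$. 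First I would apply Theorem~\ref{T:max-extension} to this pair: $B-A_N\geq0$ extends $B-A$, so $B-A$ is subpositive, $(B-A)_N$ exists, and $\AmaxB:=B-(B-A)_N$ is the largest positive operator extension of $A$ dominated by $B$.

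Next I would read $\AmaxB$ back as a functional. Since $B-A=A_{g-f}$ is induced by the functional $g-f$ on $\M$, its Krein--von Neumann extension $(B-A)_N$ is induced by the functional $(g-f)_N$, so that $\AmaxB=B-(B-A)_N$ is induced by $f^g_{max}:=g-(g-f)_N$. To justify forming $(g-f)_N$ by Theorem~\ref{T:functext} I must check that $g-f$ is admissible and Hilbert bounded on $\M$; Hilbert boundedness follows from the Cauchy--Schwarz inequality applied to the positive functional $g-f_N$ (which coincides with $g-f$ on $\M$), while admissibility is the delicate point addressed below.

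Granting that $f^g_{max}$ is representable, the maximality part needs only the operator-level minimality of Theorem~\ref{T:Krein-von Neumann}, and no representability of differences: if $\widetilde f$ is a representable extension of $f$ with $\widetilde f\leq g$, then $A_{\widetilde f}$ is positive, extends $A$ and satisfies $A_{\widetilde f}\leq B$, so $B-A_{\widetilde f}\geq0$ extends $B-A$, whence $(B-A)_N\leq B-A_{\widetilde f}$ and therefore $A_{\widetilde f}\leq\AmaxB$, i.e. $\widetilde f\leq f^g_{max}$. For the interval identity I would copy the Schwarz computation from Theorem~\ref{T:max-extension}: for $f_N\leq\widetilde f\leq f^g_{max}$ (with $\widetilde f$ representable and $\leq g$), $a\in\M$ and $y\in\alg$, positivity of $A_{\widetilde f}-A_N$ gives $\abs{\dual{(A_{\widetilde f}-A_N)a}{y}}^2\leq \dual{(A_{\widetilde f}-A_N)a}{a}\,\dual{(A_{\widetilde f}-A_N)y}{y}\leq\dual{(\AmaxB-A_N)a}{a}\,\dual{(A_{\widetilde f}-A_N)y}{y}=0$, because $A_N a=\AmaxB a=Aa$; hence $A_{\widetilde f}a=Aa$ for $a\in\M$, which yields $f\subset\widetilde f$ (immediately upon taking $x=1$ in $\widetilde f(x^*a)=f(x^*a)$ in the unital case, and with the standard adjustment otherwise).

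The main obstacle is precisely the representability of $f^g_{max}$, equivalently the admissibility of $g-f$ (ultimately of $g-f_N$). This cannot come from the domination $\AmaxB\leq B$ alone, since the admissibility inequality runs the wrong way under domination. Instead I would exploit the representable structure of $g$: passing to a GNS triple $(\pi,\hil,\xi)$ with $g(a)=\sip{\pi(a)\xi}{\xi}$, every positive functional dominated by $g$ has the form $a\mapsto\sip{\pi(a)T\xi}{\xi}$ for some $T$ in the commutant $\pi(\alg)'$ with $0\leq T\leq I$; writing $T=T^{1/2}T^{1/2}$ and using $T^{1/2}\in\pi(\alg)'$ exhibits such a functional as $a\mapsto\sip{\pi(a)T^{1/2}\xi}{T^{1/2}\xi}$, hence representable and in particular admissible (each $\pi(x)$ being bounded). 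Applying this Radon--Nikodym-type fact to $g-f_N$ and to $f^g_{max}=g-(g-f)_N$ (which satisfies $f_N\leq f^g_{max}\leq g$) closes the gap, and I expect this to be the real content of the theorem beyond the mechanical transcription of Theorem~\ref{T:max-extension}.
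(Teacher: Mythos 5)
Your proposal is correct and follows essentially the same route as the paper: it too sets $f^g_{max}:=g-(g-f)_N$, obtains the existence of $(g-f)_N$ from the fact that $0\leq g-f_N\leq g$ makes $g-f_N$ a representable extension of $g-f$, transfers the maximality argument from Theorem \ref{T:max-extension}, and settles the interval identity by the same Schwarz/Hilbert-boundedness computation showing $(\widetilde f-f_N)(a)=0$ on $\M$. The only difference is one of packaging: where you sketch the GNS/commutant Radon--Nikodym argument to show that a positive functional dominated by a representable one is again representable (correctly identifying this as the crux), the paper simply cites \cite{palmer}*{Proposition 9.4.22} for that fact.
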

\begin{proof}
By assumption, $0\leq g-f_N\leq g$, hence $g-f_N$ is representable due to \cite{palmer}*{Proposition 9.4.22}. Clearly, $g-f\subset g-f_N$, hence $(g-f)_N$ exists. Set 
\begin{equation*}
f^g_{max}:=g-(g-f)_N,
\end{equation*}
which is again representable by \cite{palmer}*{Proposition 9.4.22}. One can follow the argument of the proof Theorem \ref{T:max-extension} to show that  $f^g_{max}$ possesses every claimed properties.

Finally, let $\widetilde f\in[f_N,f^g_{max}]$, then for $a\in\M$, 
\begin{align*}
\abs{(\widetilde f-f)(a)}^2&=\abs{(\widetilde f-f_N)(a)}^2\leq M' (\widetilde f-f_N)(a^* a)\leq M' ( f^g_{max}-f_N)(a^* a)=0
\end{align*}
for some $M'\geq0$, because of Hilbert boundedness. Hence $f\subset \widetilde f$, which completes the proof.
\end{proof}


\begin{thebibliography}{10}

\bibitem{Alpay}
D. Alpay, S. Gabriyelyan, Positive definite functions and dual pairs of locally convex spaces, \emph{Opuscula Math.} 38, no. 4 (2018), 463--482.

\bibitem{ATV}
D. Alpay, O. Timoshenko,  D. Volok, Carathéodory functions in the Banach space setting, \emph{Linear Algebra and its Applications},  425 (2–3) 2007, 700--713.

\bibitem{as}
T. Ando, W. Szyma\'nski, Order Structure and Lebesgue Decomposition of Positive Definite Operator Functions, \emph{Indiana
Univ. Math. J.}, \newblock{35 (1986)}, \newblock{157–-173.}

\bibitem{AG}
Gr. Arsene and A. Gheondea, Completing Matrix Contractions, \emph{Journal of Operator Theory}, Vol. 7, No. 1 (1982),  179--189.

\bibitem{AGLMS}
M. S. Ashbaugh, F. Gesztesy, A. Laptev, M. Mitrea, S. Sukhtaiev, A bound for the eigenvalue counting function for Krein-von Neumann and Friedrichs extensions, \emph{Advances in Mathematics}, 304 (2017),  1108--1155.

\bibitem{AGMST}
M. S. Ashbaugh, F. Gesztesy, M. Mitrea, R. Shterenberg, G. Teschl, The Krein–von Neumann extension and its connection to an abstract buckling problem, \emph{Mathematische Nachrichten}, Vol. 283 (2) 2010, 165--179.

\bibitem{BH}
D. Baidiuk, S. Hassi,
Completion, extension, factorization, and lifting of operators,
\emph{Math. Ann.} 364 (2016), no. 3-4, 1415--1450.

\bibitem{BN}
J. F. Brasche, H. Neidhardt, Some Remarks on Krein's Extension Theory, \emph{Mathematische Nachrichten}, Vol 165 (1994), 159--181.

\bibitem{Douglas}
R.~G. Douglas,
\newblock On majorization, factorization, and range inclusion of operators on  Hilbert space,
\newblock {\em Proc. Amer. Math. Soc.}, {17} (1966), 413--415.

\bibitem{Farkas-Matolcsi} B. Farkas, M. Matolcsi, Positive forms on Banach spaces, \emph{Acta Mathematica Hungarica}, 2003 (99), 43--55.

\bibitem{Halmos}
P. R. Halmos, Subnormal operators and the subdiscrete topology, \emph{Anniversary Volume on Approximation Theory and Functional Analysis}, Birkhäuser, Basel,  49--65, 1984.

\bibitem{HMdS} S. Hassi, M. Malamud, H. de Snoo, On Kreĭn's extension theory of nonnegative operators, \emph{Mathematische Nachrichten}, Vol. 274-275 (2004), 40--73.

\bibitem{HSdSW}
S. Hassi, A. Sandovici, H. de Snoo, H. Winkler, A general factorization approach to the extension theory of nonnegative operators and relations, \emph{J. Operator Theory} 58 (2007), 351--386.

\bibitem{HSdeS}
S. Hassi, Z. Sebesty\'en, H.S.V. de Snoo, 
\newblock On the nonnegativity of operator products, 
\newblock \emph{Acta Math. Hungar.} {109} (2005), 1--14.

\bibitem{HSdS}
S. Hassi, Z. Sebesty{\'e}n, H. de Snoo, Lebesgue type decompositions for nonnegative forms,
\emph{Journal of Functional Analysis}, 257 (2009), 3858--3894.



\bibitem{Kadison} R. V. Kadison, A generalized Schwarz inequality and algebraic invariants for operator algebras, {\em Annals of Mathematics}, {56} (1952), 494--503.

\bibitem{Krein} M. G. Krein, {The theory of self-adjoint extensions of semi-bounded Hermitian transformations
and its applications. I}, \emph{Rec. Math. [Mat. Sbornik] N.S.} 20 (62) (1947), 431--495.

\bibitem{M}
P. Masani, Dilations as Propagators of Hilbertian Varieties, \emph{SIAM J. Math. Anal.}, 9(3), 414--456.

\bibitem{JvN}
J. von Neumann, Allgemeine Eigenwerttheorie Hermitescher Funktionaloperatoren, \emph{Math. Ann.} 102, 49--131.


\bibitem{palmer}
T. W. Palmer, 
\newblock  {\em Banach Algebras and the General Theory of $^*$-Algebras II},
\newblock Cambridge University Press (2001).


\bibitem{Schaefer}
H. H. Schaefer, {Topological vector spaces}, Third printing corrected. Graduate Texts in Mathematics, Vol. 3. \emph{Springer-Verlag}, New York-Berlin, 1971.

\bibitem{Skau}
C. F. Skau, Positive self-adjoint extensions of operators affiliated with a von Neumann algebra, \emph{Mathematica Scandinavica}, 44 (1979), 171--195.

\bibitem{Sebestyen84}
Z.~Sebesty{\'e}n,
\newblock On representability of linear functionals on $^*$-algebras,
\newblock {\em  Period. Math. Hungar.}, {15} (1984), 233--239.

\bibitem{Sebestyen93}
Z.~Sebesty{\'e}n,
\newblock Operator extensions on {H}ilbert space,
\newblock {\em Acta Sci. Math. (Szeged)}, {57} (1993), 233--248.

\bibitem{SebTarMultilin}
Z. Sebesty\'en, Zs. Tarcsay, On the adjoint of Hilbert space operators, \emph{Linear and Multilinear Algebra}, 2018, online first, DOI: 10.1080/03081087.2018.1430120.

\bibitem{SSZT}
Z. Sebesty{\'e}n, Zs. Sz\H ucs, Zs. Tarcsay,
Extensions of positive operators and functionals,
\emph{Linear Algebra Appl.}, {472} (2015), 54--80.


\bibitem{s1}
W. Szyma\'nski,
Positive forms and dilations,\emph{Trans Amer Math.} 301(2) (1987), 761--780.

\bibitem{TZS:closedrange}
Zs. Tarcsay, Operator extensions with closed range, \emph{Acta Math. Hungar.}, {135} (2012), 325--341.

\bibitem{TarcsayBanach}
Zs. Tarcsay, Closed range positive operators on Banach spaces, \emph{ Acta Math. Hungar.}, 142 (2014), 494--501.

\bibitem{Wilansky}
A. Wilansky, {Modern methods in topological vector spaces}, \emph{McGraw-Hill International Book Co., New York}, {1978}.

\end{thebibliography}
\end{document}